\setlist[1]{itemsep=0pt}
\theoremstyle{plain}
\newtheorem{theorem}{Theorem}[section]
\newtheorem{lemma}[theorem]{Lemma}
\newtheorem{proposition}[theorem]{Proposition}
\newtheorem{conjecture}[theorem]{Conjecture}
\theoremstyle{definition}
\newtheorem{definition}[theorem]{Definition}
\newtheorem{remark}[theorem]{Remark}
\newtheoremstyle{customname}{}{}{\itshape}{}{\bfseries}{.}{.5em}{\thmnote{#3}}
\theoremstyle{customname}
\newcommand{\textdef}{\textbf}
\newcommand{\TAA}{
  \coordinate (y) at (0,0);
  \coordinate (x) at (0,1);
  \coordinate (a) at (-1,1);
  \coordinate (c) at (-1,0);
  \coordinate (b) at (1,1);
  \coordinate (d) at (1,0);
  \coordinate (e) at (2,1);
  \coordinate (f) at (2,0);
}
\newcommand{\TAAMA}{
  \coordinate (a) at (-1,1);
  \coordinate (c) at (-1,0);
  \coordinate (b) at (1,1);
  \coordinate (d) at (1,0);
  \coordinate (e) at (2,1);
  \coordinate (f) at (2,0);
}
\newcommand{\TAAMB}{
  \coordinate (a) at (-1,1);
  \coordinate (c) at (-1,0);
  \coordinate (e) at (2,1);
  \coordinate (f) at (2,0);
}
\newcommand{\TAB}{
  \coordinate (y) at (0,0);
  \coordinate (x) at (0,1);
  \coordinate (a) at (-1,1);
  \coordinate (c) at (-1,0);
  \coordinate (b) at (1,1);
  \coordinate (d) at (1,0);
}
\newcommand{\TABM}{
  \coordinate (a) at (-1,1);
  \coordinate (c) at (-1,0);
  \coordinate (b) at (1,1);
  \coordinate (d) at (1,0);
}
\newcommand{\TBA}{
      \coordinate (x) at (-0.5,0.5);
      \coordinate (y) at (0.5,0.5);
      \coordinate (w) at (0.5,-0.5);
      \coordinate (z) at (-0.5,-0.5);
      \coordinate (a) at (-1.35,0);
      \coordinate (b) at (1.35,0);
      \coordinate (e) at (-2.35,0);
      \coordinate (f) at (2.35,0);
}
\newcommand{\TBAM}{
  \coordinate (e) at (-2.35,0);
  \coordinate (f) at (2.35,0);
}
\newcommand{\TBB}{
  \coordinate (a) at (0,1);
  \coordinate (x) at (-0.3,0);
  \coordinate (y) at (-0.85,-0.5);
  \coordinate (z) at (0.3,0);
  \coordinate (w) at (0.85,-0.5);
  \coordinate (b) at (-1.7,-1);
  \coordinate (d) at (1.7,-1);
  \coordinate (e) at (0,2);
}
\newcommand{\TBBM}{
  \coordinate (a) at (0,0);
  \coordinate (b) at (-1.7,-1);
  \coordinate (d) at (1.7,-1);
  \coordinate (e) at (0,2);
}
\newcommand{\TBC}{
  \coordinate (x) at (0,0);
  \coordinate (y) at (30:0.5);
  \coordinate (z) at (-30:1);
  \coordinate (w) at (-90:0.5);
  \coordinate (d) at (210:1);
  \coordinate (a) at (150:0.5);
  \coordinate (b) at (90:1);
  \coordinate (c) at (-30:2);
  \coordinate (e) at (90:2);
  \coordinate (f) at (210:2);
}
\newcommand{\TBCA}{
  \coordinate (x) at (0.5,0);
  \coordinate (y) at (0.5,1);
  \coordinate (z) at (1.5,0);
  \coordinate (w) at (0.5,-1);
  \coordinate (a) at (-0.5,0);
  \coordinate (b) at (-0.5,1);
  \coordinate (c) at (2.5,0);
  \coordinate (d) at (-0.5,-1);
  \coordinate (e) at (-1.5,0);
  \coordinate (g) at (-2.5,0);
}
\newcommand{\TBCAM}{
  \coordinate (c) at (2.5,0);
  \coordinate (g) at (-2.5,0);
}
\newcommand{\TBCBM}{
  \coordinate (c) at (-30:2);
  \coordinate (e) at (90:2);
  \coordinate (f) at (210:2);
  \coordinate (b) at (0:0);
}
\newcommand{\TBD}{
  \coordinate (a) at (-1.5,0.5);
  \coordinate (b) at (1.5,0.5);
  \coordinate (c) at (1.5,-0.5);
  \coordinate (d) at (-1.5,-0.5);
  \coordinate (x) at (-0.5,0.5);
  \coordinate (y) at (0.5,0.5);
  \coordinate (z) at (0.5,-0.5);
  \coordinate (w) at (-0.5,-0.5);
}
\newcommand{\TBDMA}{
  \coordinate (a) at (-1.5,0.5);
  \coordinate (b) at (1.5,0.5);
  \coordinate (c) at (1.5,-0.5);
  \coordinate (d) at (-1.5,-0.5);
  \coordinate (y) at (0.5,0.5);
  \coordinate (z) at (0.5,-0.5);
}
\newcommand{\TBDMB}{
  \coordinate (a) at (-1.5,0.5);
  \coordinate (b) at (1.5,0.5);
  \coordinate (c) at (1.5,-0.5);
  \coordinate (d) at (-1.5,-0.5);
}
\newcommand{\TC}{
  \coordinate (x) at (0:0);
  \coordinate (y) at (90:1);
  \coordinate (z) at (-30:1);
  \coordinate (w) at (-150:1);
  \coordinate (a) at (120:1.732);
  \coordinate (b) at (60:1.732);
  \coordinate (c) at (0:1.732);
  \coordinate (d) at (-60:1.732);
  \coordinate (e) at (-120:1.732);
  \coordinate (f) at (-180:1.732);
}
\newcommand{\TCM}{
  \coordinate (x) at (0:0);
  \coordinate (y) at (90:0.8);
  \coordinate (z) at (-30:0.8);
  \coordinate (w) at (-150:0.8);
  \coordinate (a) at (120:1.732);
  \coordinate (b) at (60:1.732);
  \coordinate (c) at (0:1.732);
  \coordinate (d) at (-60:1.732);
  \coordinate (e) at (-120:1.732);
  \coordinate (f) at (-180:1.732);
}
\DeclareMathOperator{\Fac}{Fac}
\DeclareMathOperator{\PerMat}{PerMat}
\title{Connected cubic graphs with the maximum number of perfect matchings}
\author{
Peter Horak\footnote{School of Interdisciplinary Arts and Sciences, University of Washington, Tacoma. Email: \texttt{horak@uw.edu}.} \and 
Dongryul Kim\footnote{Corresponding author, Department of Mathematics, Stanford University, 450 Jane Stanford Way, Building 380, Stanford, CA 94305. Email: \texttt{dkim04@stanford.edu}.}
}
\begin{document}

\maketitle

\begin{abstract}
  It is proved that for $n \ge 6$, the number of perfect matchings in a simple
  connected cubic graph on $2n$ vertices is at most $4f_{n-1}$, with $f_{n}$
  being the $n$-th Fibonacci number. The unique extremal graph is characterized
  as well.

  In addition, it is shown that the number of perfect matchings in any cubic
  graph $G$ equals the expected value of a random variable defined on all
  $2$-colorings of edges of $G$. Finally, an improved lower bound on the
  maximum number of cycles in a cubic graph is provided. 
\end{abstract}

{
\noindent
\textbf{Keywords}\quad cubic graph, perfect matching, 2-factor, extremal graph theory
}

\tableofcontents

\section{Introduction}

It is natural to ask how many cycles, Hamiltonian cycles, and $2$-factors a
graph can have. To the best of our knowledge this question for cycles was
considered for the first time by Ahrens \cite{Ahr97} in 1897. This paper
focuses on the number of $2$-factors in cubic graphs.

We first note that in a cubic graph, the complement of a $1$-factor (i.e., a
perfect matching) is a $2$-factor, and vice versa. Therefore, for a cubic graph
$G$, the number of perfect matchings, the number of $1$-factors, the number of
$2$-factors are all equal, and we denote this by
\[
  \PerMat(G) = \Fac(G). 
\]

There is an extensive literature on the number of perfect matchings in cubic
graphs. We mention here only the most pertinent results. It is shown in
\cite{EKKKN11} that the number of perfect matchings in a $2$-connected cubic
graph is at least exponential with its order, thus confirming an old conjecture
of Lov\'{a}sz and Plummer \cite{LP09}.

As for the maximum number of perfect matchings, Alon and Friedland \cite{AF08}
proved a general result. Its restriction to cubic graphs states:

\begin{theorem}[Alon--Friedland \cite{AF08}, 2008] \label{thm:Alon-Friedland}
  For a simple cubic graph $G$ on $2n$ vertices,
  \[
    \PerMat(G) \leq 6^{n/3}.
  \]
  This bound is tight, and it is attained by taking the disjoint union of
  bipartite complete graphs $K_{3,3}$.
\end{theorem}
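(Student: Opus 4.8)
The plan is to obtain this as the cubic special case of the Kahn--Lov\'asz bound $\PerMat(G)\le\prod_{v}(d_v!)^{1/(2d_v)}$ (the product over the degree sequence $(d_v)$ of $G$), whose proof rests on two classical facts: Bregman's theorem --- the resolution of Minc's conjecture --- bounding the permanent of a $0$--$1$ matrix by $\operatorname{per}(A)\le\prod_i (r_i!)^{1/r_i}$ in terms of its row sums $r_i$, together with a comparison between $\PerMat(G)$ and the number of perfect matchings of the bipartite double cover of $G$. Write $\tilde G$ for this double cover: its vertex set is $V(G)\times\{+,-\}$, with $(u,+)$ adjacent to $(v,-)$ exactly when $uv\in E(G)$. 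Then $\tilde G$ is a simple cubic bipartite graph on $4n$ vertices with $2n$ on each side, and its biadjacency matrix is precisely the adjacency matrix $A=A(G)$, so $\PerMat(\tilde G)=\operatorname{per}(A)$.

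The key step I would carry out is the comparison $\PerMat(G)^2\le\PerMat(\tilde G)$. Here $\operatorname{per}(A)$ counts the cycle covers of $G$ --- ways to cover $V(G)$ by vertex-disjoint directed cycles, where a directed cycle of length two is an edge traversed in both directions. Fix a linear order on $V(G)$. Given an ordered pair $(M,N)$ of perfect matchings, $M\cup N$ decomposes into the edges of $M\cap N$ together with even cycles that alternate between $M\setminus N$ and $N\setminus M$; send each edge of $M\cap N$ to a length-two directed cycle, and orient each alternating cycle by the rule ``the arc leaving that cycle's lowest-indexed vertex lies in $M$''. This produces a cycle cover of $G$, and the resulting assignment $(M,N)\mapsto{}$cover is injective: from the cover one reads $M\cap N$ off the $2$-cycles, and on each longer directed cycle one recovers $M$ and $N$ by alternating, starting from the distinguished vertex. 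Hence $\PerMat(G)^2\le\operatorname{per}(A)=\PerMat(\tilde G)$.

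Now apply Bregman's theorem to $A$, every row sum of which equals $3$:
\[
  \PerMat(\tilde G)=\operatorname{per}(A)\le\prod_{i=1}^{2n}(3!)^{1/3}=6^{2n/3},
\]
and combining with the previous inequality gives $\PerMat(G)^2\le 6^{2n/3}$, i.e.\ $\PerMat(G)\le 6^{n/3}$. For tightness (when $3\mid n$), the disjoint union of $n/3$ copies of $K_{3,3}$ is a simple cubic graph on $2n$ vertices, and since perfect matchings of a disjoint union multiply, its matching count is $\PerMat(K_{3,3})^{n/3}=(3!)^{n/3}=6^{n/3}$. To pin down the extremal graph one would further invoke the equality case of Bregman's theorem --- forcing the biadjacency matrix of $\tilde G$ to be, after permuting rows and columns, block-diagonal with all-ones $3\times3$ blocks --- together with equality in the double-cover comparison, and then untangle what this forces for $G$ itself.

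I expect the main obstacle to be exactly this double-cover comparison and the accompanying equality analysis: one must set up the canonical orientation of the alternating cycles carefully enough that the map into cycle covers stays injective, and, for the uniqueness statement, carefully translate Bregman's block-diagonal equality condition on $\tilde G$ back into a statement about $G$ (keeping in mind that the double cover of a bipartite graph splits as two disjoint copies). Bregman's theorem itself I would use purely as a black box.
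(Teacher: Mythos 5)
Your proposal is correct and is essentially the argument the paper attributes to Alon and Friedland: the paper does not reprove this cited theorem, but its accompanying remark describes exactly your route, namely Br\`egman's inequality applied to the bipartite double cover together with the comparison $\PerMat(G)^2 \le \PerMat(D(G))$ (the paper's Lemma~\ref{lem:doubling-the-graph}), which your cycle-cover injection establishes correctly.
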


In other words, the above theorem says that the complete bipartite graph
$K_{3,3}$ has the highest ``density'' of perfect matchings among all cubic
graphs; thus the disjoint union of its copies constitutes the extremal graph.
However, this result does not provide any insight into the structure of
extremal connected cubic graphs. 

\begin{remark}
  When $G$ is bipartite, Theorem~\ref{thm:Alon-Friedland} is an immediate
  consequence of the Br\`{e}gman--Minc inequality, proved by Br\`{e}gman
  \cite{Bre73} in 1973. The contribution of Alon and Friedland was introducing
  a clever trick that extended the result from bipartite graphs to all graphs.
  We later use the same trick, in the form of
  Lemma~\ref{lem:doubling-the-graph}. 
\end{remark}

If only connected cubic (but not necessarily simple) graphs are considered then
Galbiati \cite{Gal81} proved the following theorem. 

\begin{theorem}[Galbati \cite{Gal81}, 1981]
  For a connected cubic (multi) graph $G$ on $2n$ vertices,
  \[
    \PerMat(G) \leq 2^{n}+1.
  \]
  This bound is tight, and it is attained by taking a cycle of length $2n$ and
  putting parallel edges alternatively.
\end{theorem}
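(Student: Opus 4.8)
The plan is to prove $\PerMat(G)\le 2^{n}+1$ by strong induction on $n$; sharpness then needs only a direct count in the claimed extremal graph $N_n$ (the cycle $v_1v_2\cdots v_{2n}$ with the edges $v_1v_2,v_3v_4,\dots$ doubled): a perfect matching either uses the single edge at $v_1$, which propagates around the cycle and forces the unique matching consisting of all single edges, or uses a copy of the doubled edge at $v_1$, which forces one copy of every doubled edge to be used, giving $2^n$ matchings; hence $\PerMat(N_n)=2^n+1$.

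For the upper bound I would first dispose of the case that $G$ is simple, where Theorem~\ref{thm:Alon-Friedland} already gives $\PerMat(G)\le 6^{n/3}<2^n<2^n+1$. So assume $G$ has a pair of parallel edges. If it has three parallel edges, connectedness forces $G$ to be the triple edge on two vertices ($n=1$, $\PerMat(G)=3$). Otherwise fix a doubled edge $uv$ with the remaining edges $ux$ and $vy$. Every perfect matching covers $u$ either through one of the two copies of $uv$, leaving a perfect matching of $G-u-v$, or through $ux$, which forces $vy$ and leaves a perfect matching of $G-u-v-x-y$; hence
\[
  \PerMat(G)=2\,\PerMat(G-u-v)+\PerMat(G-u-v-x-y).
\]
When $x\neq y$, set $G':=(G-u-v)+xy$; since $\{u,v\}$ meets the rest of $G$ only in the edges $ux,vy$, the graph $G-u-v$ has at most two components and $G'$ is a connected cubic multigraph on $2(n-1)$ vertices, with $\PerMat(G')=\PerMat(G-u-v)+\PerMat(G-u-v-x-y)$ (sort matchings by whether they use $xy$). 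Subtracting yields the clean recursion $\PerMat(G)=\PerMat(G')+\PerMat(G-u-v)$; by induction $\PerMat(G')\le 2^{n-1}+1$, so it remains to show $\PerMat(G-u-v)\le 2^{n-1}$. (When $x=y$, every perfect matching uses the doubled edge, so $\PerMat(G)=2\,\PerMat(G-u-v)$ with $G-u-v$ having a vertex of degree $1$, and the same bound gives $\PerMat(G)\le 2^n$.)

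The core of the proof is therefore an auxiliary bound: \emph{any connected multigraph $H$ of maximum degree at most $3$ that is not cubic, on $2m$ vertices, has $\PerMat(H)\le 2^m$}. Indeed $G-u-v$ is either such a graph (with $x,y$ its only degree‑$2$ vertices) or splits into two odd‑order components and has no perfect matching at all, so in either case $\PerMat(G-u-v)\le 2^{n-1}$. I would prove the auxiliary bound by induction on the number of vertices. Since $H$ is not cubic it has a vertex $z$ of degree $\le 2$; a pendant edge or a doubled edge at such a $z$ is forced, reducing to a smaller graph at the cost of a factor $\le 2$, while a degree‑$2$ vertex $z$ with two distinct neighbours $a,b$ gives $\PerMat(H)=\PerMat(H-z-a)+\PerMat(H-z-b)$. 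The class stays closed because of the elementary fact that when a vertex $w$ is deleted from a connected graph, every resulting component contains a neighbour of $w$, whose degree has therefore dropped to $\le 2$; thus each component is again connected, of maximum degree $\le 3$, and non‑cubic, and the inductive hypothesis applied componentwise (orders summing to $2m-2$) bounds $\PerMat(H-z-a)$ and $\PerMat(H-z-b)$ by $2^{m-1}$, hence $\PerMat(H)\le 2^m$.

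I anticipate that the main obstacle is exactly this bookkeeping: arranging the inductive class so that \emph{all} the reductions — forced pendant and doubled edges, vertex deletions, and the disconnections these may trigger — stay inside it, while tracking parity so that the degenerate branches genuinely contribute $0$. Once the auxiliary bound is available the main estimate closes immediately, $\PerMat(G)\le(2^{n-1}+1)+2^{n-1}=2^n+1$; running the same induction while demanding equality at each step pins down the local structure at every reduction and recovers that $N_n$ is the extremal graph.
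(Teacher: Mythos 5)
The paper does not prove this statement at all --- it is quoted from Galbiati's 1981 paper as background --- so there is no in-house proof to compare against; I can only assess your argument on its own terms, and it is correct. The sharpness count for the alternately doubled cycle is right, the recursion $\PerMat(G)=2\PerMat(G-u-v)+\PerMat(G-u-v-x-y)=\PerMat(G')+\PerMat(G-u-v)$ with $G'=(G-u-v)+xy$ is valid (including when $x,y$ were already adjacent, since the added edge is just another parallel copy), and the auxiliary lemma ``connected, maximum degree $\le 3$, not cubic, $2m$ vertices $\Rightarrow \PerMat\le 2^m$'' together with the componentwise/parity bookkeeping does close the induction. Two small points should be made explicit rather than asserted: (i) when $G-u-v$ is disconnected, the claim that both components have odd order follows from a degree count (a component $C$ containing $x$ has degree sum $3\lvert C\rvert-1$, which must be even, forcing $\lvert C\rvert$ odd), and this is what guarantees $\PerMat(G-u-v)=0$ in that branch; (ii) you should state the convention that the multigraphs are loopless (otherwise ``not simple'' need not yield a doubled edge), which is the standard reading of Galbiati's theorem and matches the extremal graph. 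Finally, invoking Theorem~\ref{thm:Alon-Friedland} for the simple case is legitimate within this paper, where that result is quoted, and indeed $6^{n/3}<2^n$; but be aware it is a much later and heavier tool than the 1981 statement needs --- your own auxiliary lemma applied after deleting the two endpoints of any edge of a simple cubic graph would not immediately suffice (it only gives $3\cdot 2^{n-1}$ via a vertex split), so if you wanted a fully self-contained proof you would need a separate, slightly sharper argument for the simple case.
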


The main result of this paper characterizes the extremal graphs if only simple
connected cubic graphs are taken into account. It is somewhat counter-intuitive
that the extremal graphs for large enough $n$ are $2$-connected but not
$3$-connected. 

\begin{theorem} \label{AA}
  Let $G$ be a simple connected cubic graph on $2n$ vertices. Then, $\PerMat(G)
  \le m_n$, and moreover, equality is achieved if and only if $G$ is isomorphic
  to one of the graphs in the family $M_n$ (see
  Section~\ref{sec:maximal-graphs} for the definition of $m_n$ and $M_n$). In
  particular, for $n \ge 6$ we have a sharp bound 
  \[
    \PerMat(G) \le 4 f_{n-1},
  \]
  where $f_n$ denotes the $n$th Fibonacci number. 
\end{theorem}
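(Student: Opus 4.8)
The plan is to argue by induction on $n$, peeling off a small structure from the graph and relating $\PerMat(G)$ to the number of perfect matchings in a smaller cubic graph. First I would establish a clean recursive handle: since $G$ is simple, connected and cubic, I want to find an edge $uv$ that is not a bridge, and contract or delete a suitable neighbourhood so that the resulting (multi)graph is still cubic on $2(n-1)$ vertices; the number of perfect matchings of $G$ then splits as a sum over the local matching configurations at the excised piece, each term being $\PerMat$ of a smaller cubic graph. The Fibonacci recursion $f_{n-1}=f_{n-2}+f_{n-3}$ is exactly what one gets from this kind of "two ways to resolve the local piece" count, so the target bound $4f_{n-1}$ is compatible with $4f_{n-1}=4f_{n-2}+4f_{n-3}$. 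The base cases (roughly $n\le 5$, where the bound $4f_{n-1}$ is stated only for $n\ge 6$) must be checked by hand against the finitely many small cubic graphs, using $m_n$ as defined in Section~\ref{sec:maximal-graphs}.

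The key technical device I would use is Lemma~\ref{lem:doubling-the-graph} (the Alon--Friedland doubling trick, cited as already available), together with the Br\`egman--Minc inequality, to control configurations where the induction does not obviously apply — in particular to bound $\PerMat(G)$ whenever $G$ is $3$-connected, so that one can show $3$-connected graphs never beat $M_n$. The heart of the argument is a case analysis on the local connectivity of $G$: (i) if $G$ has a $2$-edge-cut, splitting along it produces two smaller cubic graphs and a clean multiplicative/additive bound; (ii) if $G$ is $3$-edge-connected (essentially cyclically $4$-edge-connected after handling triangles), one uses the doubling bound $\PerMat(G)^2 \le \PerMat(G\text{ doubled}) \le 6^{2n/3}$ and shows this is strictly less than $m_n$ for $n$ large, i.e. the extremal graph cannot be highly connected; (iii) the remaining "few short cuts" cases are pushed through the contraction recursion above. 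Throughout, one tracks the equality case: equality in each reduction forces the smaller graph to be $M_{n-1}$ and forces the local piece to have the shape occurring in $M_n$, which is how one extracts the uniqueness statement.

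The main obstacle, I expect, is the equality analysis rather than the inequality itself. Getting $\PerMat(G)\le m_n$ is a matter of organizing the case split so that every branch is dominated by the Fibonacci recursion or by the doubling bound; but proving that $M_n$ is the *unique* extremal graph requires showing that every reduction step that attains equality reconstructs exactly $M_n$ and no other graph — this means ruling out all "competing" gluings of extremal pieces, and in particular verifying that the non-$3$-connected structure of $M_n$ is forced. A secondary subtlety is the interface between the asymptotic Fibonacci bound (valid for $n\ge 6$) and the exact value $m_n$ for small $n$: one must make sure the induction is seeded far enough out that $m_n=4f_{n-1}$ holds and that the base graphs are correctly identified, so the bookkeeping of which $n$ the recursion "bottoms out" at has to be done carefully. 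Parallel edges appearing after contraction also need care, since the theorem's extremal claim is about *simple* graphs while the recursion naturally lives among multigraphs; one resolves this by carrying the simple-vs-multi distinction explicitly through the induction hypothesis.
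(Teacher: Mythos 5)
Your plan breaks down at the step you lean on hardest: bounding the highly connected graphs by the doubling trick plus Br\`egman--Minc. The Alon--Friedland bound gives $\PerMat(G)^2 \le \PerMat(D(G)) \le 6^{2n/3}$, i.e. $\PerMat(G)\le 6^{n/3}\approx 1.817^n$, while the target is $m_n \sim 1.106\,\varphi^{n}$ with $\varphi\approx 1.618$. Since $6^{1/3}>\varphi$, the inequality $6^{2n/3}<m_n^2$ you would need is false for all large $n$; the crude bound exceeds $m_n$ by an exponential factor, so it cannot rule out $3$-connected (or cyclically $4$-edge-connected) competitors. The paper uses doubling in the opposite direction: it first proves the \emph{sharp} bound $\Fac\le m_{2n}$ for connected \emph{bipartite} cubic graphs by an induction with local surgeries --- the ladder-bridge case giving the Fibonacci-type recursion $\Fac(G)\le m_{n-1}+m_{n-2}$, $4$-cycle cases handled with multiplicity factors $2,3,4,6$ against the inequalities of Lemma~\ref{lem:inequalities-for-m}, and a girth-$\ge 6$ averaging argument $4\Fac(G)\le\sum_{v,j}\Fac(G_j^v)\le 6m_{n-1}<4m_n$ --- and only then applies $\Fac(G)^2\le\Fac(D(G))\le m_{2n}<m_n^2$ to dispose of non-bipartite $G$. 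That final step works only because the sharp bipartite bound (not $6^{n/3}$) is available and because $m_{2n}<m_n^2$; your outline has no substitute mechanism for graphs with no small cuts and no short cycles.

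Two further points are not routine bookkeeping. First, allowing multigraphs after contraction cannot be patched by ``carrying the distinction through the induction'': for cubic multigraphs the truth is Galbiati's bound $2^{n}+1$, exponentially larger than $4f_{n-1}$, so an inductive hypothesis that admits parallel edges is too weak to close the argument. The paper's case split is engineered precisely so that every reduced graph is again a \emph{simple} connected bipartite cubic graph (checking that the endpoints to be joined are distinct and non-adjacent, and routing the bad configurations into the ladder-bridge case). Second, the equality analysis is concentrated: only the two branches realizing $\Fac(G)\le\Fac(G')+\Fac(G'')\le m_{n-1}+m_{n-2}$ can attain equality, and there one must additionally argue that reattaching the excised piece to $M_{n-1}$ forces $G\cong M_n$ (in one branch this uses that the relevant edge of $M_{n-1}$ must be a ladder-bridge); your proposal correctly anticipates that uniqueness is the delicate part but gives no handle on where equality can occur once the (unavailable) connectivity dichotomy is removed.
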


For $3$-connected graphs, we make the following conjecture about the maximum number of perfect matchings. 

\begin{conjecture}
  Let $G$ be a simple $3$-connected cubic graph on $2n$ vertices, where $n \ge 3$. Then
  \[
    \PerMat(G) \le f_n + 2 f_{n-1} + 2,
  \]
  with equality attained if and only if $G$ is isomorphic to 
  \begin{itemize}
    \item the circular ladder graph (with vertices $x_1, \dotsc, x_n, y_1, \dotsc, y_n$
      and edges $x_i y_i, x_i x_{i+1}, y_i y_{i+1}$ for $1 \le i \le n$,
    indices taken modulo $n$) when $n$ even, and
    \item the M\"{o}bius ladder graph (with vertices $x_1, \dotsc, x_{2n}$ and
      edges $x_i x_{i+1}$ and $x_i x_{i+n}$ for $1 \le i \le 2n$, indices taken
    modulo $2n$) when $n$ is odd,
  \end{itemize}
  where $f_n$ denotes the $n$th Fibonacci number. 
\end{conjecture}

The conjecture was verified by a computer search for $n \le 9$. Note that the
difference between the $3$-connected case and the general case is by a small
constant. Unlike the maximal graphs, the graphs that minimize the number of
perfect matchings (under appropriate connectivity hypotheses) tend to be
complicated. 

In addition, we derive a formula for counting the number of perfect matchings. 

\begin{theorem} \label{BB}
  For a cubic graph $G$ of order $2n$, the number of perfect matchings can be
  calculated as the expected value $\PerMat(G)= \operatorname{E}(X)$, where $X$
  is a random variable defined on the set of all $2$-colorings $c$ on the edges
  of $G$, each coloring equally likely, and $X(c)=(-3)^{m_{c}}$, where $m_{c}$
  is the number of vertices of $G$ incident in $c$ with three edges of the same
  color.
\end{theorem}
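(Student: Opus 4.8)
The plan is to evaluate the sum $\sum_c (-3)^{m_c}$ over all $2^{|E|}$ edge $2$-colorings by turning it into a polynomial identity and reading off $\PerMat(G)$. Identify the two colors with $+1$ and $-1$, so that a coloring is a vector $x = (x_e)_{e\in E} \in \{\pm 1\}^{E}$, and $m_c$ is the number of vertices all three of whose incident edges receive the same value. The starting point is a purely local identity: if a vertex $v$ has incident edges $e,f,h$, then $-(x_e x_f + x_f x_h + x_h x_e)$ equals $-3$ when $x_e = x_f = x_h$ and equals $+1$ in every other case. Consequently $(-3)^{m_c} = \prod_{v \in V}\bigl(-(x_e x_f + x_f x_h + x_h x_e)\bigr)$, and since $|V| = 2n$ is even the overall sign is $+1$, giving $(-3)^{m_c} = \prod_{v\in V}\bigl(x_e x_f + x_f x_h + x_h x_e\bigr)$.

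Next I would expand this product over the vertices. Each term of the expansion corresponds to choosing, at every vertex $v$, one of the three unordered pairs of edges incident to $v$ --- equivalently, choosing a function $\sigma$ that assigns to each vertex the unique incident edge it \emph{omits}; the corresponding term equals $\prod_{e\in E} x_e^{d_e(\sigma)}$, where $d_e(\sigma)\in\{0,1,2\}$ is the number of endpoints of $e$ that do not omit $e$. Summing over all colorings and using the orthogonality relation $\sum_{t\in\{\pm1\}} t^{d} = 2$ for $d$ even and $0$ for $d$ odd, every edge must contribute an even exponent, so
\[
  \sum_c (-3)^{m_c} \;=\; \sum_{\sigma}\sum_{x\in\{\pm1\}^{E}} \prod_{e\in E} x_e^{d_e(\sigma)} \;=\; 2^{|E|}\cdot \#\bigl\{\sigma : d_e(\sigma)\ \text{even for all}\ e\bigr\}.
\]

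It then remains to exhibit a bijection between the omission-functions $\sigma$ with all $d_e(\sigma)$ even and the perfect matchings of $G$. Given such a $\sigma$, an edge $e=uv$ has $d_e(\sigma)=0$ exactly when $e$ is omitted at both its endpoints; let $F$ be the set of these edges. If $\sigma(v)=e'$ then $v$ does not lie among the non-omitting endpoints of $e'$, so $d_{e'}(\sigma)\le 1$, hence $d_{e'}(\sigma)=0$ and $e'\in F$. Thus every vertex meets $F$, and it meets $F$ in exactly one edge, since two $F$-edges at $v$ would both have to equal $\sigma(v)$; therefore $F$ is a perfect matching, and conversely each perfect matching recovers such a $\sigma$ (omit the matching edge at every vertex). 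Plugging the count $\PerMat(G)$ into the display and dividing by the number $2^{|E|}$ of equally likely colorings yields $\operatorname{E}(X)=\PerMat(G)$.

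The step I expect to be the only real obstacle is this last bijection: one must check carefully that the ``all exponents even'' condition rigidly forces $\sigma$ to come from a perfect matching, ruling out any looser configuration of omitted edges. The local identity and the orthogonality computation are routine once set up. (A harmless point worth stating explicitly is that $G$ is loopless, as in the simple cubic case, so that every edge has two distinct endpoints and $d_e(\sigma)\le 2$; the argument then goes through for cubic multigraphs without loops verbatim.)
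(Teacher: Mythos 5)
Your argument is correct, and it reaches the identity by a genuinely more elementary route than the paper. The paper's (sketched) proof is a tensor-network evaluation: it assigns $\alpha=\lvert 0\rangle\otimes\lvert 0\rangle+\lvert 1\rangle\otimes\lvert 1\rangle$ to each edge and the local map $\beta$ to each vertex, so that the full contraction $\beta^{\otimes V(G)}(\alpha^{\otimes E(G)})$ counts perfect matchings in the $\lvert 0\rangle,\lvert 1\rangle$ basis (equation \eqref{eqn:tensor-1}), and then re-evaluates the same contraction in the rotated basis $\lvert x\rangle,\lvert y\rangle=\tfrac{1}{\sqrt2}(\pm\lvert 0\rangle+\lvert 1\rangle)$, where it becomes $2^{-3n}\sum_c(-3)^{m_c}$. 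You travel the same bridge in the opposite direction and without the formalism: encoding a coloring as a vector in $\{\pm1\}^{E}$, you verify the local identity $(-3)^{m_c}=\prod_{v}(x_ex_f+x_fx_h+x_hx_e)$, expand the product into ``omission functions'' $\sigma$, and annihilate all odd-exponent terms via the orthogonality relation $\sum_{t\in\{\pm1\}}t^{d}\in\{0,2\}$; the surviving $\sigma$ with every $d_e(\sigma)$ even are then shown, by the rigidity argument you flagged (and which you verify correctly: $\sigma(v)\in F$ forces $F$ to cover every vertex, and uniqueness of the omitted edge at $v$ forces exactly one $F$-edge per vertex), to be in bijection with perfect matchings. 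The two proofs are Fourier-dual to each other --- your orthogonality step is precisely the paper's change of basis --- but yours is self-contained and makes explicit the combinatorial bijection that the paper leaves implicit inside its contraction identity, while the paper's formulation has the advantage, noted in its closing remark, that other basis choices mechanically produce other formulas of the same flavor. Your explicit restriction to loopless cubic (multi)graphs is also the right hypothesis for both arguments.
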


The above formula is not feasible for practical calculations but we believe it
is of a theoretical value. The proof is based on interpreting the number of
perfect matchings as an evaluation of a suitable quantum field theory. We
include only a sketch of the proof in Section~\ref{sec:quantum-field-theory}. 

Finally, we turn to the original question of Ahrens \cite{Ahr97}, on the
maximum number of cycles in a connected graph $G = (V,E)$. It is most
convenient to study the number of cycles in a connected graph $G$ with respect
to its cyclomatic number $r(G) = \lvert E \rvert - \lvert V \rvert + 1$, since
the number of cycles is bounded by $2^{r(G)}-1$. Let $\Psi(r)$ be the maximum
number of cycles among all graphs with the cyclomatic number equal to $r$.
Entringer and Slater \cite{ES81} showed that the problem of determining
$\Psi(r)$ can be reduced to cubic graphs; they proved that, for $r \geq 3$,
there is a connected cubic graph with the cyclomatic number $r$ and $\Psi(r)$
cycles. In addition they conjectured that $\Psi(r) \sim 2^{r-1}$. So far the
best upper bound has been provided by Aldred and Thomassen \cite{AT08} who
proved there that $\Psi(r) \leq \frac{15}{16}2^{r}$.

As for the lower bounds on $\Psi (r)$, the first lower bound $\Psi (r)\geq
2^{r-1}+f(r)$, where the error function $f(r)$ is exponential has been given in
\cite{Hor14}. In Section~\ref{sec:many-cycles}, we show that a graph obtained
from $ C_{n} \times K_{2}$ by replacing consecutive pairs of ``parallel'' rungs
by ``crossing'' ones provides a bound on $\Psi (r)$ with an improved
exponential error term.

\begin{theorem} \label{CC}
  There exists a constant $c > 0$ for which 
  \[
    \Psi(r) \geq 2^{r-1} + c r 2^{r/2}
  \]
  for all sufficiently large $r$. 
\end{theorem}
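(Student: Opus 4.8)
The goal is to give a graph family with cyclomatic number $r$ and roughly $2^{r-1} + cr2^{r/2}$ cycles. The construction hinted at in the excerpt is a "twisted prism": start from the prism $C_n \times K_2$, which has $2n$ vertices, $3n$ edges, and cyclomatic number $r = n+1$, then modify it by replacing consecutive pairs of parallel rungs by crossing ones. I would first fix precise notation: label the two $n$-cycles $u_1\cdots u_n$ and $v_1 \cdots v_n$, with rungs $u_i v_i$. A "crossed pair" at position $i$ means deleting $u_i v_i$ and $u_{i+1} v_{i+1}$ and inserting $u_i v_{i+1}$ and $u_{i+1} v_i$. Performing such crossings on a selection of disjoint consecutive pairs keeps the graph cubic, connected, simple (for $n$ not too small), and preserves the cyclomatic number $r = n+1$. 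Let $G_k$ denote the graph where $k$ disjoint crossed pairs have been inserted around the cycle; the plan is to show that for a suitable choice of $k$ proportional to $n$, the graph $G_k$ has at least $2^{r-1} + cr2^{r/2}$ cycles.

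**Counting cycles via transfer matrices.** The key step is an exact or near-exact count of the number of cycles in $G_k$. Cycles in a prism-like graph decompose according to how they interact with each rung region: at each of the $n$ "columns" a cycle either uses the top edge, the bottom edge, both, or neither (and must enter/leave via rungs consistently). This is precisely the kind of structure counted by a transfer matrix product: assign to each column a constant-size matrix recording the partial-cycle "states" on the two horizontal strands, and to a crossed column a modified matrix. The number of cycles of $G_k$ is then extracted from a trace-like expression $\operatorname{tr}(A^{n-2k} B^k)$ plus lower-order correction terms accounting for the global closing-up condition and for the exceptional "short" cycles. I would set up the $2\times 2$ (or slightly larger) transfer matrices $A$ (plain column) and $B$ (crossed pair, so a $2$-column block), compute their eigenvalues, and observe that the dominant eigenvalue of $A$ is $2$ while $B$'s relevant eigenvalue behaves like $4 \cdot(\text{something} < 1)$ per two columns — so each crossed pair multiplies the leading term by a factor strictly between $\tfrac14\cdot 4=1$ and $1$, i.e.\ it \emph{decreases} the count slightly, but introduces a correction of order $2^{r/2}$ relative to the main term. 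Summing the binomial-type contributions over the $k$ crossed positions and expanding, the total comes out as $2^{r-1}(1 + \Theta(k \cdot 2^{-r/2}) \cdot \text{stuff})$; choosing $k = \lfloor \alpha n\rfloor$ makes the correction term $\Theta(r 2^{r/2})$ with a positive constant.

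**Why this beats the old bound.** The point of the crossings (as opposed to the plain prism, which already gives $\sim 2^{r-1}$) is that a crossed pair creates new "medium-length" cycles that the uncrossed prism lacks — cycles that wrap partway around and use the crossing to shortcut back. Each such cycle is determined by choosing its two endpoints among the crossed columns and then a binary pattern on the columns strictly between them, giving on the order of $\binom{k}{2}$ families each of size a power of two summing to $\Theta(r2^{r/2})$ once $k = \Theta(r)$. I would make this precise by isolating, inside the transfer-matrix expansion, the terms linear in the number of crossed blocks and showing their total is $\geq cr2^{r/2}$ while the (negative) higher-order corrections are $O(r^2 2^{r/3})$ or similar, hence dominated. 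Combining: $\Psi(r) \geq \#\text{cycles}(G_k) \geq 2^{r-1} + cr2^{r/2}$ for all large $r$, using that every $r \geq r_0$ is realized as $n+1$ for some valid $n$.

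**Main obstacle.** The hard part will be the bookkeeping in the transfer-matrix count: cycles, unlike closed walks or the full cycle space, are genuinely global objects (a subgraph is a single cycle only if it is connected and $2$-regular), so the naive trace over-counts disjoint unions of cycles and under/over-counts the "wrap-around" cases depending on parity and on where the crossed blocks sit. Extracting exactly the single-cycle count — rather than the cycle-space dimension — requires either an inclusion–exclusion over connected components of $2$-regular subgraphs or a careful direct case analysis of how a cycle can traverse the annular structure (number of times it winds, which is $0$ or $1$ for a cycle in a graph of this width). I expect to spend most of the effort isolating the $\Theta(r2^{r/2})$ term cleanly and bounding all error terms, and in verifying that the chosen density $\alpha$ of crossings keeps the graph simple and doesn't accidentally cancel the gain; the eigenvalue computations themselves are routine once the matrices are written down.
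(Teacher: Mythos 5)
Your choice of construction family (the prism $C_n \times K_2$ with consecutive pairs of parallel rungs replaced by crossings) is the right one, but the quantitative mechanism you attribute to it is not, and with the parameter choice you describe the bound fails. In such a graph the cycles that wind around the annulus number exactly $2^n = 2^{r-1}$ no matter how many pairs are crossed: at each gap not interior to a crossed block you choose which strand to use, and each crossed block then admits exactly two routings, giving $2^{n-k}\cdot 2^k$. So the entire gain must come from non-winding cycles. A non-winding cycle is a ``doubled arc'': the set of gaps where it uses both horizontal edges is an interval, the routing is forced at every uncrossed column inside that interval, and each crossed block strictly inside contributes a factor of exactly $2$. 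Hence the number of non-winding cycles is of order $\sum_{\mathrm{arcs}} 2^{j(\mathrm{arc})}$, where $j$ is the number of crossed blocks inside the arc, which is at most $O(n^2\, 2^{k})$ when there are $k$ crossed pairs. In particular a single crossed pair creates only polynomially many new cycles, not $\Theta(2^{r/2})$, and the correction is \emph{not} linear in $k$ in the sense of your expansion $2^{r-1}\bigl(1+\Theta(k\,2^{-r/2})\bigr)$ or your ``$\binom{k}{2}$ families'' count (your binary pattern lives only on the crossed columns between the endpoints, not on all columns). With $k=\lfloor\alpha n\rfloor$ and any $\alpha<\tfrac12$ the gain is $O(n^2 2^{\alpha n})=o(2^{r/2})$, so no choice of density $\alpha<\tfrac12$ works: you are forced to cross \emph{every} consecutive pair ($2k=n$), which is exactly the graph $MC_n$ used in the paper. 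Even then, the factor $r$ in $r2^{r/2}$ comes from the $\Theta(r)$ positions and lengths of near-maximal doubled arcs, each contributing $\Theta(2^{r/2})$, not from summing per-pair contributions.

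Concerning your anticipated ``main obstacle'': once the construction is fixed, the transfer-matrix formalism and the inclusion--exclusion over components are unnecessary. The paper classifies cycles by the parity of $w_i=\lvert C\cap\{x_{2i}x_{2i+1},y_{2i}y_{2i+1}\}\rvert$, which is the same for every inter-block gap $i$. Odd parity gives exactly $2^k$ winding cycles, and connectivity is automatic because such a cycle crosses each gap exactly once; even parity gives the doubled arcs, which are automatically connected except when the doubled region wraps all the way around, in which case one only needs the number of crossing-rung pairs used to be odd (contributing $2^{k/2-1}$). This yields the exact count $2^k+(k+\tfrac12)2^{k/2}-\tfrac32 k$ for even $k$, hence $\Psi(r)\ge 2^{r-1}+(r-\tfrac12)2^{(r-1)/2}-\tfrac32(r-1)$ for odd $r$ (and the analogous bound for even $r$), from which the statement follows for any fixed $c<\tfrac12$ and all large $r$. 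If you keep the transfer-matrix route you must also handle the non-spanning and winding-number issues you flag, but the parity argument above makes them disappear.
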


For a more precise bound, we refer the reader to Section~\ref{sec:many-cycles}. 

\section{The maximal graphs} \label{sec:maximal-graphs}

Before we prove Theorem~\ref{AA}, we collect properties that we will later use
to prove Theorem~\ref{AA}. The results in this section mostly follow from a
routine verification, hence we leave some of the details to the reader.

We define the Fibonacci numbers by
\[
  f_n = \frac{1}{\sqrt{5}} (\varphi^n - \varphi^{-n}), \quad \varphi =
  \frac{1+\sqrt{5}}{2},
\]
so that they satisfy
\[
  f_0 = 0, \quad f_1 = 1, \quad f_2 = 1, \quad f_n = f_{n-1} + f_{n-2}
\]
for all integers $n$. 

For the purpose of stating and proving Theorem~\ref{AA}, we define $m_n$ for $n \ge 3$ as
\[
  m_2 = 3, \quad m_3 = 6, \quad m_4 = 9, \quad m_5 = 13, \quad m_n = 4 f_{n-1} \text{ for } n \ge 6.
\]
We collect the inequalities among $m_n$ that we will use later. Note that we asymptotically have $m_n \sim c \varphi^n$ where $c \approx 1.106$ and $\varphi \approx 1.618$ is the golden ratio; this is useful for doing a quick sanity check. 

\begin{lemma} \label{lem:inequalities-for-m}\quad 
  \begin{enumerate}[label=(\roman*)]
    \item \label{itm:ineq-viii} For $n \ge 8$, we have $m_n = m_{n-1} + m_{n-2}$. 
    \item \label{itm:ineq-i} For $n \ge 6$, we have $\frac{3}{2} m_{n-1} < m_n$. 
    \item \label{itm:ineq-ii} For $n \ge 5$, we have $2 m_{n-2} < m_{n}$. 
    \item \label{itm:ineq-iii} For $n \ge 8$, we have $4 m_{n-3} \le m_{n}$, with equality only for $n = 8$. 
    \item \label{itm:ineq-iv} For $n \ge 9$, we have $6 m_{n-4} < m_{n}$. 
    \item \label{itm:ineq-ix} For $n \ge 6$, we have $m_{n+1} < \sqrt{3} m_n$. 
    \item \label{itm:ineq-v} For $n \ge 3$, we have $m_{2n} < m_n^2$. 
    \item \label{itm:ineq-vii} For $a, b \ge 3$, if $a+b \ge 8$ then $m_a m_b < m_{a+b+1}$. 
  \end{enumerate}
\end{lemma}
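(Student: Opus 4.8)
The plan is to prove all eight inequalities by reducing everything to explicit Fibonacci identities once we are in the range $n \ge 6$, and to handle the small cases $n \le 7$ by the finite list of values $m_2 = 3, m_3 = 6, m_4 = 9, m_5 = 13, m_6 = 4f_5 = 20, m_7 = 4f_6 = 32, m_8 = 4f_7 = 52, m_9 = 4f_8 = 84$, and so on. So the first step is to tabulate $m_n$ for all small $n$ that appear in the boundary cases of the eight items, and to record the recurrence $f_n = f_{n-1}+f_{n-2}$ together with the closed form $f_n = (\varphi^n - \varphi^{-n})/\sqrt 5$.

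For item \ref{itm:ineq-viii}, since $m_n = 4f_{n-1}$ for $n \ge 6$, the identity $m_n = m_{n-1}+m_{n-2}$ for $n \ge 8$ is exactly $4f_{n-1} = 4f_{n-2} + 4f_{n-3}$, which is the Fibonacci recurrence; this is the one identity that then drives everything else. For items \ref{itm:ineq-i}--\ref{itm:ineq-iv}, \ref{itm:ineq-ix}, once all indices involved are $\ge 6$ (so all the $m$'s are $4f$'s), each inequality becomes a statement about ratios of consecutive Fibonacci numbers, which converge monotonically to powers of $\varphi$: e.g. \ref{itm:ineq-i} is $f_{n-1}/f_{n-2} > 3/2$, true since $f_{n-1}/f_{n-2} \to \varphi \approx 1.618 > 1.5$ and one checks the smallest case directly; \ref{itm:ineq-ii} is $2f_{n-3} < f_{n-1}$, i.e. $f_{n-1}/f_{n-3} > 2$; \ref{itm:ineq-iii} is $4f_{n-4} \le f_{n-1}$ with equality iff $f_{n-1} = 4f_{n-4}$, which by the identity $f_{n-1} = 3f_{n-4} + 2f_{n-5}$ (from iterating the recurrence) holds iff $f_{n-4} = 2f_{n-5}$, i.e. $n-4 = 3$, i.e. $n = 8$ wait — one must instead track $f_{n-1} = 3f_{n-3}+2f_{n-4}$ carefully wait again: $f_{n-1}=f_{n-2}+f_{n-3}=2f_{n-3}+f_{n-4}=3f_{n-4}+2f_{n-5}$, so $f_{n-1}-4f_{n-4} = 2f_{n-5}-f_{n-4} = f_{n-5}-f_{n-6}$, nonnegative with equality iff $n-6 \le 1$; combined with $n\ge 8$ this pins equality at $n=8$. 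Similarly \ref{itm:ineq-iv} and \ref{itm:ineq-ix} are ratio estimates $f_{n-1}/f_{n-5} > 6$ and $f_n/f_{n-1} < \sqrt 3 \approx 1.732$; since these ratios are monotone in $n$ and bracket $\varphi$ appropriately, checking the extreme small case plus the limit suffices. In each case the boundary cases where some index drops below $6$ (e.g. $n = 6, 7$ in \ref{itm:ineq-i}, or $n = 5, 6, 7$ in \ref{itm:ineq-ii}) are verified directly from the table.

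For the two ``quadratic'' items \ref{itm:ineq-v} and \ref{itm:ineq-vii}, the clean range is when all indices are $\ge 6$: then \ref{itm:ineq-v} reads $4f_{2n-1} < 16 f_{n-1}^2$, i.e. $f_{2n-1} < 4 f_{n-1}^2$, and \ref{itm:ineq-vii} reads $16 f_{a-1}f_{b-1} < 4 f_{a+b}$, i.e. $4 f_{a-1} f_{b-1} < f_{a+b}$. Both follow from the standard identity $f_{m+k} = f_m f_{k+1} + f_{m-1} f_k$: taking $m = a, k = b$ gives $f_{a+b} = f_a f_{b+1} + f_{a-1} f_b \ge (f_{a-1}+f_{a-2})(f_{b}+f_{b-1})$ wait, more cleanly $f_{a+b} \ge f_a f_b \ge \varphi^2 f_{a-1} f_{b-1}$ and then a bit more care gives the constant $4$ once $a, b$ are not too small; for \ref{itm:ineq-v} one specializes $a = b = n$. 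The main obstacle is bookkeeping: for \ref{itm:ineq-v} with $n = 3, 4, 5$ and for \ref{itm:ineq-vii} with small $\{a,b\}$ such as $\{3,5\}, \{4,4\}, \{3,6\}, \{4,5\}$ the $m$'s are given by the ad hoc values $3, 6, 9, 13$ rather than by Fibonacci numbers, so these finitely many cases must be checked one by one from the table, and one must make sure the hypothesis $a + b \ge 8$ in \ref{itm:ineq-vii} is exactly what rules out the failures at $a+b = 7$ or less. None of this is deep — it is the kind of "routine verification" the authors have warned us about — so the real work is organizing the finite check cleanly and invoking the Fibonacci sum identity for the tail.
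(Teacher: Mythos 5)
There is a genuine gap, and it sits in the product inequality $m_a m_b < m_{a+b+1}$ (the last item). Your case split is ``all indices $\ge 6$, handled by Fibonacci identities'' versus ``finitely many small cases from the table,'' but the cases in between are \emph{infinite}: for $a \in \{3,4,5\}$ and arbitrary $b \ge 6$ the factor $m_a$ is one of the ad hoc values $6, 9, 13$ (not $4f_{a-1}$), while $b$ is unbounded, so the pairs $(3,b), (4,b), (5,b)$ are neither in your clean range nor in any finite table check. They do succumb to the same identity, e.g.\ $a=3$ needs $6 f_{b-1} < f_{b+3} = 5f_{b-1}+3f_{b-2}$, but your proposal never argues them. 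Moreover, even in the clean range your estimate is not actually closed: the chain $f_a f_b \ge \varphi^2 f_{a-1} f_{b-1}$ is false as stated (consecutive Fibonacci ratios oscillate around $\varphi$, e.g.\ $f_6/f_5 = 1.6 < \varphi$), and $\varphi^2 \approx 2.62$ would not reach the needed constant $4$ anyway; ``a bit more care gives the constant $4$'' is exactly the step that must be written down, say via explicit bounds such as $f_{k+1} \ge \tfrac{8}{5} f_k$ for $k \ge 4$, giving $f_a f_{b+1} \ge \tfrac{8}{5}\cdot\tfrac{13}{5} f_{a-1} f_{b-1} > 4 f_{a-1} f_{b-1}$. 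The same caveat applies to your repeated appeal to ``monotone'' convergence of ratios in the single-index items: those ratios are not monotone, so ``smallest case plus limit'' is not a proof, though it is easily repaired by two-term expansions like $f_{n-1} - \tfrac32 f_{n-2} = \tfrac12(f_{n-3}-f_{n-4}) > 0$. Finally, your equality analysis for $4m_{n-3} \le m_n$ is garbled: at $n=8$ the Fibonacci formula does not apply ($m_5 = 13 \neq 4f_4$), so equality at $n=8$ comes from the table, while the identity you derive shows strictness for all $n \ge 9$; as written you ``pin'' equality at $n=8$ from an identity that is not valid there.

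For comparison, the paper avoids all of this bookkeeping by making item (i), the recurrence $m_n = m_{n-1}+m_{n-2}$ for $n \ge 8$, do the work: each single-index inequality is proved by a short induction (e.g.\ $\tfrac32 m_{n-1} = \tfrac32 m_{n-2} + \tfrac32 m_{n-3} < m_{n-1}+m_{n-2} = m_n$), the product inequality is proved by induction on $a+b$ with the reduction $m_a m_b = m_{a-1}m_b + m_{a-2}m_b < m_{a+b} + m_{a+b-1} = m_{a+b+1}$ once $\max(a,b)\ge 8$ (which automatically absorbs the mixed small/large cases you missed), and $m_{2n} < m_n^2$ uses $f_{2n-1} = f_n^2 + f_{n-1}^2$ together with $m_{n+1} < \sqrt3\, m_n$, much as you sketch. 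If you adopt that inductive scheme, or explicitly treat the $a \in \{3,4,5\}$, $b$ large family and replace the monotonicity claims with concrete ratio bounds, your approach goes through.
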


\begin{proof}
  \ref{itm:ineq-viii} follows from the definition that $m_n = 4 f_{n-1}$ for $n \ge 6$. For \ref{itm:ineq-i}, we can check the inequality by hand when $6 \le n \le 8$. When $n \ge 9$, we use induction; assuming that the inequality holds for smaller $n \ge 6$, we can write
  \[
    \frac{3}{2} m_{n-1} = \frac{3}{2} m_{n-2} + \frac{3}{2} m_{n-3} < m_{n-1} + m_{n-2} = m_n
  \]
  by \ref{itm:ineq-viii}, since $n \ge 9$. The other inequalities \ref{itm:ineq-ii}, \ref{itm:ineq-iii}, \ref{itm:ineq-iv}, \ref{itm:ineq-ix} can be proved using a similar inductive argument. 

  For \ref{itm:ineq-v}, we first do the case $n \le 5$ manually. For $n \ge 6$, we note that
  \[
    m_{2n} = 4f_{2n-1} = 4 (f_n^2 + f_{n-1}^2) = \frac{1}{4}(m_{n+1}^2 + m_n^2) < \frac{1}{4} ((\sqrt{3} m_n)^2 + m_n^2) = m_n^2
  \]
  by \ref{itm:ineq-ix}. Here, the identity $f_{2n-1} = f_n^2 + f_{n-1}^2$ is folklore. 

  For \ref{itm:ineq-vii}, we induct on $a + b$. When $\max(a,b) \le 7$, we can
  check manually. If $\max(a,b) \ge 8$, without loss of generally assume that
  $a \ge 8$. Then by \ref{itm:ineq-viii} and the inductive hypothesis, 
  \[
    m_a m_b = m_{a-1} m_b + m_{a-2} m_b < m_{a+b} + m_{a+b-1} = m_{a+b+1}.
  \]
  Here, the induction hypothesis applies because $(a-2) + b \ge 8 - 2 + 3 = 9$. 
\end{proof}

We now define the graphs that attain the maximal number of $2$-factors for a
given number of vertices. 

\begin{definition}
  For $n \ge 2$, we define the family $M_n$ of simple connected cubic graph with $2n$ vertices as follows:
  \begin{itemize}
    \item $M_2 = \{ K_4 \}$;
    \item $M_3 = \{ K_{3,3} \}$;
    \item $M_4$ contains a single graph that is $K_{4,4}$ with a perfect matching removed;
    \item $M_5$ contains a single graph that is the M\"{o}bius ladder on $10$ vertices, see Figure~\ref{fig:M5};
    \item $M_6$ contains two graphs, one that is the circular ladder graph on $12$ vertices, see Figure~\ref{fig:M6}, and one that is the ladder graph with $K_{3,3}$ inserted at both ends, see Figure~\ref{fig:Mn},
    \item for $n \ge 7$, $M_n$ contains a single graph that is the ladder graph with $K_{3,3}$ inserted at both ends, see Figure~\ref{fig:Mn}. 
  \end{itemize}
  Except for $n = 6$, the family $M_n$ contains a single graph. We shall abuse notation by also writing $M_n$ for the unique graph contained the family $M_n$, if no confusion is likely to arise. 
\end{definition}

\begin{figure}[t]
  \centering
  \begin{tikzpicture}
    \def\r{0.4}
    \draw (0,0) arc (90:270:\r);
    \draw (0,1.5) arc (270:90:\r);
    \draw (0,0) to[out=0,in=-105] (0.5,0.75) to[out=75,in=180] (1,1.5);
    \draw (0,1.5) to[out=0,in=105] (0.5,0.75) to[out=-75,in=180] (1,0);
    \draw (1,0) -- (6,0);
    \draw (1,1.5) -- (6,1.5);
    \foreach \x in {1,2,3,4,5} {
      \draw (\x+0.5,0) -- (\x+0.5,1.5);
      \fill (\x+0.5,0) circle (0.05);
      \fill (\x+0.5,1.5) circle (0.05);
    }
    \draw (6,0) arc (90:-90:\r);
    \draw (6,1.5) arc (-90:90:\r);
    \draw (6,-2*\r) -- (0,-2*\r);
    \draw (6,1.5+2*\r) -- (0,1.5+2*\r);
  \end{tikzpicture}
  \caption{The M\"{o}bius ladder on $10$ vertices}
  \label{fig:M5}
\end{figure}

\begin{figure}[t]
  \centering
  \begin{tikzpicture}
    \def\r{0.4}
    \draw (0,0) arc (90:270:\r);
    \draw (0,1.5) arc (270:90:\r);
    \draw (0,0) -- (6,0);
    \draw (0,1.5) -- (6,1.5);
    \foreach \x in {0,1,2,3,4,5} {
      \draw (\x+0.5,0) -- (\x+0.5,1.5);
      \fill (\x+0.5,0) circle (0.05);
      \fill (\x+0.5,1.5) circle (0.05);
    }
    \draw (6,0) arc (90:-90:\r);
    \draw (6,1.5) arc (-90:90:\r);
    \draw (6,-2*\r) -- (0,-2*\r);
    \draw (6,1.5+2*\r) -- (0,1.5+2*\r);
  \end{tikzpicture}
  \caption{The circular ladder on $12$ vertices}
  \label{fig:M6}
\end{figure}

\begin{figure}[t]
  \centering
  \begin{tikzpicture}
    \draw (0,0) -- (2.5,0);
    \draw (3.5,0) -- (5,0);
    \draw (0,1.5) -- (2.5,1.5);
    \draw (3.5,1.5) -- (5,1.5);
    \draw (0,0) -- (-1,0.2) -- (-1,1.3) -- (0,1.5);
    \draw (0,0) -- (-2,-0.3) -- (-2,1.8) -- (0,1.5);
    \draw (-1,0.2) -- (-2,1.8);
    \draw (-1,1.3) -- (-2,-0.3);
    \fill (0,0) circle (0.05);
    \fill (0,1.5) circle (0.05);
    \fill (-1,0.2) circle (0.05);
    \fill (-1,1.3) circle (0.05);
    \fill (-2,-0.3) circle (0.05);
    \fill (-2,1.8) circle (0.05);
    \begin{scope}[shift={(5,0)},xscale=-1]
      \draw (0,0) -- (-1,0.2) -- (-1,1.3) -- (0,1.5);
      \draw (0,0) -- (-2,-0.3) -- (-2,1.8) -- (0,1.5);
      \draw (-1,0.2) -- (-2,1.8);
      \draw (-1,1.3) -- (-2,-0.3);
      \fill (0,0) circle (0.05);
      \fill (0,1.5) circle (0.05);
      \fill (-1,0.2) circle (0.05);
      \fill (-1,1.3) circle (0.05);
      \fill (-2,-0.3) circle (0.05);
      \fill (-2,1.8) circle (0.05);
    \end{scope}
    \foreach \x in {1,2,4} {
      \draw (\x,0) -- (\x,1.5);
      \fill (\x,0) circle (0.05);
      \fill (\x,1.5) circle (0.05);
    }
    \node at (3,0) {$\cdots$};
    \node at (3,1.5) {$\cdots$};
  \end{tikzpicture}
  \caption{The ladder graph with $K_{3,3}$ inserted at both ends}
  \label{fig:Mn}
\end{figure}

\begin{proposition}
  For every $n \ge 2$ and every graph $G \in M_n$, we have $\Fac(G) = m_n$. 
\end{proposition}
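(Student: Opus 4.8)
The plan is to verify $\Fac(M_n) = m_n$ by a case split matching the definition of $M_n$. For the five small cases $n \in \{2,3,4,5\}$ (and also $n=6,7$ as base cases for the recursion), I would compute $\Fac(M_n)$ directly: $K_4$ has $3$ perfect matchings; $K_{3,3}$ has $6$; $K_{4,4}$ minus a perfect matching has $9$; the Möbius ladder on $10$ vertices has $13$; and the two ladder-with-$K_{3,3}$-ends graphs $M_6, M_7$ can be counted by hand (or by the transfer-matrix method below), giving $m_6 = 4f_5 = 20$ and $m_7 = 4f_6 = 32$. These are finite checks.

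For $n \ge 6$ the graph $M_n$ is a ladder of some length with a fixed gadget (the $K_{3,3}$ with a pendant pair of half-edges) glued at each end, so the natural tool is a transfer matrix. Concretely, I would set up a $2\times 2$ (or small) transfer matrix $T$ that records, as we sweep across the ladder rung by rung, whether the current vertical rung edge is used in the matching. A perfect matching of a ladder segment is determined by which rungs are chosen, subject to the constraint that between two consecutive chosen rungs an even number of rungs is skipped (so the horizontal edges in the skipped block pair up); this is exactly the mechanism producing Fibonacci numbers, since the number of ways to tile a $1 \times k$ strip with rungs and dominoes-of-skipped-pairs is $f_{k+1}$-like. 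I would then compute the two boundary vectors contributed by the left and right $K_{3,3}$ end-gadgets — each gadget has a fixed, finite number of internal matchings compatible with "rung edge at the attaching point used" versus "not used" — and express $\Fac(M_n)$ as a bilinear form $u^\top T^{\,\ell(n)} v$ in the number $\ell(n)$ of middle rungs. This evaluates to a linear combination of $\varphi^n$ and $\varphi^{-n}$, i.e. a linear combination of Fibonacci numbers, and a short computation pins it to $4f_{n-1}$; alternatively, once the transfer matrix is in hand, one shows $\Fac(M_n) = \Fac(M_{n-1}) + \Fac(M_{n-2})$ for $n \ge 8$ directly from $T^{\ell} = T^{\ell-1} + T^{\ell-2}$ (the Cayley–Hamilton / Fibonacci recursion for $T$), and then concludes by induction from the base cases $m_6, m_7$ using Lemma~\ref{lem:inequalities-for-m}\ref{itm:ineq-viii}.

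The main obstacle — really the only nonroutine point — is bookkeeping the end-gadgets correctly: one must carefully enumerate the perfect matchings of $K_{3,3}$ with two extra half-edges sticking out toward the ladder, separating them by the parity/occupancy state they impose on the first ladder rung, and make sure the gluing convention (which ladder vertices the $K_{3,3}$ attaches to) matches Figure~\ref{fig:Mn}. Once the correct boundary vectors $u, v$ are identified, everything else is the standard Fibonacci transfer-matrix computation, and the identification with $m_n$ is immediate from the closed form for $f_n$ given above. I would present the transfer matrix explicitly, state the two boundary vectors, and leave the remaining arithmetic to the reader as the section's preamble invites.
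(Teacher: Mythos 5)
Your plan is correct, and it reaches the same crux as the paper --- the Fibonacci recursion $\Fac(M_n)=\Fac(M_{n-1})+\Fac(M_{n-2})$ for $n\ge 8$, combined with the finite checks for $n\le 7$ and Lemma~\ref{lem:inequalities-for-m}\ref{itm:ineq-viii} --- but by a different route. The paper does not set up a transfer matrix at all: it obtains the recursion by citing the local surgery of Subcase~1-1 in the proof of Theorem~\ref{AA}, where a $2$-factor of a graph with two adjacent ladder-bridges is modified, reversibly, into a $2$-factor of either $G'$ (one rung removed, giving $M_{n-1}$) or $G''$ (two rungs removed, giving $M_{n-2}$); since $M_n$ has such a configuration, the recursion is immediate and nothing end-gadget-specific needs to be computed. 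Your transfer-matrix argument is self-contained (it does not lean on the machinery of Section~\ref{sec:main-theorem}) and additionally yields a closed form $u^\top T^{\ell(n)}v$ rather than only a recursion, but it pushes the real work into the boundary vectors: you must enumerate the matchings of the $K_{3,3}$ end gadget split by whether the first ladder rung is used, and you have not carried this out --- it is the only substantive verification in your write-up, though it is a finite and routine one. One further small point: make sure the recursion you extract from $T^{\ell}=T^{\ell-1}+T^{\ell-2}$ is anchored at the same base cases as the statement; your values $m_6=20$, $m_7=32$ agree with $4f_5$ and $4f_6$, so the induction closes exactly as in the paper.
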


\begin{proof}
  For $n \le 8$, we can count the number of $2$-factors explicitly, and check that the number of $2$-factors of $G$ is exactly $m_n$. For $n \ge 9$, we induct on $n$. It is shown in Section~3, proof of Theorem~\ref{AA}, Subcase~1-1 that
  \[
    \Fac(M_n) = \Fac(M_{n-1}) + \Fac(M_{n-2}).
  \]
  Then it follows from Lemma~\ref{lem:inequalities-for-m}, \ref{itm:ineq-viii} that $\Fac(M_n) = m_{n-1} + m_{n-2} = m_n$ by the inductive hypothesis. 
\end{proof}

\section{Proof of Theorem~\ref{AA}} \label{sec:main-theorem}

For the reader's convenience, the proof of Theorem~\ref{AA} will be presented
in terms of $2$-factors instead of perfect matchings. We recall that the
complement of a perfect matching in a cubic graph is $2$-factor.

The general strategy for proving Theorem~\ref{AA} is to first prove the theorem
for bipartite graphs. Then using a trick of Alon and Friedland \cite{AF08}, we
deduce the general case. We start with an auxiliary result. 

\begin{lemma} \label{lem:connected-bipartite-cubic}
  Let $G$ be a simple connected bipartite cubic graph. 
  \begin{enumerate}
    \item[(i)] The graph $G$ does not have a bridge. 
    \item[(ii)] For an edge $xy$ of $G$, let $y,a,b$ be the neighbors of $x$ and let $x,c,d$ be the neighbors of $y$. Then the six vertices $x, y, a, b, c, d$ are all distinct. 
    \item[(ii)] For every edge $xy$ of $G$, the induced subgraph on $V(G) \setminus \{x,y\}$ has at most two components. 
  \end{enumerate}
\end{lemma}

\begin{proof}
  (i) If it has a bridge, we may remove the edge and take a component of the resulting graph. This is a bipartite graph with one vertex of degree $2$ and all other vertices having degree $3$. Counting the number of edges, we see that it is both a multiple of $3$ and equal to $2$ modulo $3$, arriving at a contradiction. 

  (ii) Since $G$ is bipartite, we may color the vertices of $G$ in black and white so that each edges is incident with one white vertex and one black vertex. Without loss of generality, assume that $x$ is black and $y$ is white. Then $x,c,d$ are black and $y,a,b$ are white. Because $G$ is simple, the three vertices $x,c,d$ are distinct, and similarly, the three vertices $y,a,b$ are distinct as well. On the other hand, the two groups of three vertices have different color, hence all six vertices $x,y,a,b,c,d$ are distinct. 

  (iii) Denote by $G^\prime$ the induced subgraph on $V(G) \setminus \{x,y\}$ and assume that $G^\prime$ has at least three components. Let the neighbors of $x$ be $y, a, b$ and the neighbors of $y$ be $x, c, d$. Because $G$ was connected, each one of the components of $G^\prime$ contains at least one of the four vertices $a, b, c, d$. By part~(i), the edge $xy$ is not a bridge in $G$. This means that there is a path in $G^\prime$ connecting either $a$ or $b$ to either $c$ or $d$. Hence, without loss of generality, we may as well assume that $a, c$ are in the same component in $G^\prime$. Then $b$ is in a different component than $a, c, d$ in $G^\prime$, which means that the edge $xb$ must be a bridge in $G$. This contradicts part~(i). 
\end{proof}

\begin{definition}
  For a simple connected bipartite cubic graph $G$, we say that an edge $e = uv$ is a \textdef{ladder-bridge} if the induced subgraph on $V(G) \setminus \{u,v\}$ is disconnected, or equivalently, has two components. 
\end{definition}

We first prove Theorem~\ref{AA} for bipartite graphs.

\begin{theorem} \label{AA-for-bipartite}
  Let $G$ be a simple connected cubic \emph{bipartite} graph on $2n$ vertices (so automatically $n \ge 3$). Then $\PerMat(G) = \Fac(G) \le m_n$, and moreover, equality is achieved if and only if $G$ is isomorphic to one of the graphs in the family $M_n$. 
\end{theorem}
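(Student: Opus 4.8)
The plan is to prove Theorem~\ref{AA-for-bipartite} by strong induction on $n$, using the structural dichotomy provided by ladder-bridges. Starting from the base cases $n \le 7$ (or however many small cases are needed to seed the induction), which are handled by direct enumeration, I would split into two main cases depending on whether the graph $G$ has a ladder-bridge.

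\medskip

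\noindent\textbf{Case 1: $G$ has a ladder-bridge $e = uv$.} Then $V(G) \setminus \{u,v\}$ has exactly two components. Since $G$ is cubic, $u$ has two neighbors other than $v$, and $v$ has two neighbors other than $u$; by Lemma~\ref{lem:connected-bipartite-cubic}(i) the edge $e$ is not a bridge, so there must be one neighbor of $u$ and one neighbor of $v$ in each of the two components. One forms two smaller cubic bipartite graphs $G_1, G_2$ by, in each component, adding back a vertex playing the role of the deleted pair (contracting or re-gluing appropriately along the two ``dangling'' half-edges in each piece), so that $G_1$ has $2a$ vertices and $G_2$ has $2b$ vertices with $a + b = n + 1$ (the $+1$ because $u,v$ get counted on both sides, or with a suitable off-by-one depending on the exact construction). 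The key combinatorial identity to establish is a recursion expressing $\Fac(G)$ in terms of the $2$-factor counts of $G_1$ and $G_2$ — in the cleanest version this should look like $\Fac(G) \le \Fac(G_1)\,\Fac(G_2)$ or a small linear combination thereof, coming from casework on how a $2$-factor of $G$ meets the ladder-bridge and its two incident rungs. Then Lemma~\ref{lem:inequalities-for-m}\ref{itm:ineq-vii} (the inequality $m_a m_b < m_{a+b+1}$ for $a,b \ge 3$, $a+b \ge 8$) gives $\Fac(G) \le m_a m_b < m_{a+b+1} = m_n$ whenever both pieces are large enough; the small-piece cases (one of $a,b$ equal to $3$, i.e., a $K_{3,3}$ hanging off) are exactly where $M_n$ arises and must be analyzed separately, tracking equality. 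A sub-case here must verify that if the ``end block'' is a $K_{3,3}$ attached through a single rung to a ladder, the recursion becomes $\Fac = \Fac(M_{n-1}) + \Fac(M_{n-2})$ — this is the computation the Proposition above already forward-references as ``Subcase 1-1''.

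\medskip

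\noindent\textbf{Case 2: $G$ has no ladder-bridge.} Then for every edge $uv$, deleting $\{u,v\}$ leaves a connected graph. The plan is to pick a suitable edge (or a short path / a vertex and work with its neighborhood) and delete a small number of vertices to produce a smaller cubic bipartite graph $G'$ on $2(n-1)$ or $2(n-2)$ vertices, bounding $\Fac(G)$ by a constant times $\Fac(G')$ and then applying the corresponding inequality from Lemma~\ref{lem:inequalities-for-m} (items \ref{itm:ineq-i}--\ref{itm:ineq-iv}, \ref{itm:ineq-ix}) to conclude $\Fac(G) < m_n$ strictly. Concretely: remove an edge $xy$ together with its four ``side'' neighbors' incident structure, or more cleanly, take an edge $xy$, remove $x$ and $y$, and note each of the $\le 2$ (here exactly $1$, since no ladder-bridge) resulting pieces can be completed to a cubic bipartite graph by adding back edges among the $\le 4$ exposed vertices of degree $2$; count how many $2$-factors of $G$ restrict to each completion. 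The bridgelessness from Lemma~\ref{lem:connected-bipartite-cubic}(i) guarantees the pieces glue back sensibly. Since in this case $G$ is ``expander-like'' and not $M_n$, the bound should be strict, consistent with $M_n$ having a ladder-bridge.

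\medskip

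\noindent I expect the main obstacle to be \textbf{Case 2}: establishing that \emph{every} ladder-bridge-free simple connected bipartite cubic graph has strictly fewer than $m_n$ two-factors. Case 1 is essentially a clean recursion plus the arithmetic Lemma~\ref{lem:inequalities-for-m}, and the equality analysis there funnels everything into $M_n$; but Case 2 requires a robust local-surgery argument that works uniformly for all such graphs with no small separators, and getting the right constant in the bound $\Fac(G) \le C \cdot \Fac(G')$ (so that $C \cdot m_{n-k} < m_n$) without it blowing up is delicate — one likely needs to choose the edge/vertex to delete carefully (e.g., using girth $\ge 4$ from bipartiteness, or avoiding certain local configurations) and possibly to iterate the reduction or handle several sub-configurations of the local neighborhood separately. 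A secondary nuisance is bookkeeping the off-by-one in vertex counts when re-gluing pieces, and ensuring the smaller graphs produced are again \emph{simple} (bipartiteness helps: no multi-edges can be forced as long as we avoid gluing two vertices with a common neighbor), so that the inductive hypothesis genuinely applies.
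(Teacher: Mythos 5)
Your Case 1 is essentially the paper's Case 1, modulo one miscalibration: the dichotomy there is not ``both pieces large'' versus ``a $K_{3,3}$ hanging off,'' but whether the ladder-bridge $xy$ sits next to a parallel rung (i.e.\ whether $a,c$ or $b,d$ are adjacent). If they are not adjacent, the re-glued graph is simple and you get the multiplicative bound $\Fac(G)\le m_a m_{n-a-1}<m_n$ via Lemma~\ref{lem:inequalities-for-m}\ref{itm:ineq-vii}, always strict; if they are adjacent (which can happen with both pieces arbitrarily large), the gluing would create a multi-edge, and one instead proves the additive recursion $\Fac(G)=\Fac(G')+\Fac(G'')\le m_{n-1}+m_{n-2}=m_n$, and this is where the equality analysis and $M_n$ live. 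That is a repairable inaccuracy, not a gap.

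The genuine gap is your Case 2 (no ladder-bridge). The surgery you propose --- delete the endpoints of an edge $xy$ and complete the four exposed vertices $a,b,c,d$ to a cubic graph --- has two completions, $G_1'$ with $ac,bd$ added and $G_2'$ with $ad,bc$ added, and the natural map sends each $2$-factor of $G$ into one of them (those avoiding $xy$ land in both). The best this yields is $\Fac(G)\le\Fac(G_1')+\Fac(G_2')\le 2m_{n-1}$, and $2m_{n-1}>m_n$ because $m_n/m_{n-1}\to\varphi<2$; so no single constant-factor reduction of this shape can close the induction, and ``choosing the edge carefully'' does not change the arithmetic. The paper needs two further ideas here that your plan does not contain: (i) graphs containing a $4$-cycle are handled by surgeries adapted to the $4$-cycle, with multiplicity bounds of the form $2m_{n-2}$, $3m_{n-3}$, $4m_{n-3}$, $6m_{n-4}$ (Lemma~\ref{lem:inequalities-for-m}\ref{itm:ineq-ii}--\ref{itm:ineq-iv}), plus one additive subcase whose equality analysis again funnels into $M_n$ --- note equality can occur outside your Case 1, so ``the bound should be strict'' is not automatic; and (ii) for girth at least $6$, a genuinely different averaging argument: perform all six reductions $G^v_j$ at a single vertex, show every $2$-factor of $G$ survives in exactly four of them, and conclude $4\Fac(G)\le 6m_{n-1}$, which beats $m_n$ only via the sharper inequality $\tfrac32 m_{n-1}<m_n$ (Lemma~\ref{lem:inequalities-for-m}\ref{itm:ineq-i}). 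The girth split is also what guarantees the surgered graphs stay simple so the inductive hypothesis applies. You correctly identified Case 2 as the obstacle, but the mechanism you sketch for it cannot succeed as stated, and the averaging idea (or some substitute beating the factor $2$) is the missing ingredient.
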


\begin{remark}
  For $n \le 8$, we content ourselves with using a computer to verify the
  statement. (There are $60$ simple connected bipartite cubic graphs with at
  most $16$ vertices in total.) However, it is possible to avoid using a
  computer at all. We can get away with verifying the statement by hand for
  $n\leq 6$ only (in total, there are only $9$ simple connected cubic bipartite
  graphs with at most $12$ vertices up to isomorphism), but then the proof of the theorem
  becomes quite longer as many small cases for $n= 7, 8$ have to be considered
  separately.
\end{remark}

The proof proceeds by induction on $n$. We deal with the inductive step by dividing into three large cases, and in each case, finding a way to bound the number of perfect matchings by those of smaller graphs. In Case~1, we examine the case when $G$ contains a ladder-bridge. This case takes care of many of the degeneracies that appear in the later cases. In Case~2, we consider when $G$ has a $4$-cycle, and in Case~3, we look at the case when $G$ does not have a $4$-cycle. 

\begin{proof}[Proof of Theorem~\ref{AA-for-bipartite}]
  We use induction on $n$. For $n \le 8$, we can perform a computer search on all the cubic bipartite graphs. Hence, let us assume that $n \ge 9$ and that the statement holds for all smaller $n$. 

  \textbf{Case 1.} The graph $G$ has a ladder-bridge $xy$. Let us write $V(G) \setminus \{x,y\} = A \cup B$, where $A$ and $B$ are the components. Note that if all neighbors of $x$ are in $A \cup \{y\}$, then one of the edges incident with $y$ is a bridge of $G$, contradicting Lemma~\ref{lem:connected-bipartite-cubic}. Hence one of the neighbors of $x$ is in $A$, one of them is in $B$, and similarly for $y$. Define $a, c \in A$ and $b, d \in B$ so that the neighbors of $x$ are $y, a, b$ and the neighbors of $y$ are $x, c, d$, as in Figure~\ref{fig:ladder-bridge}. 

  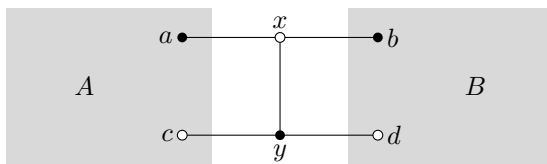
\begin{figure}[t]
    \centering
    \begin{tikzpicture}[scale=1.3]
      \coordinate (y) at (0,0);
      \coordinate (x) at (0,1);
      \coordinate (a) at (-1,1);
      \coordinate (c) at (-1,0);
      \coordinate (b) at (1,1);
      \coordinate (d) at (1,0);
      \fill[gray!30!white] (-2.8,-0.3) rectangle (-0.7,1.3);
      \fill[gray!30!white] (0.7,-0.3) rectangle (2.8,1.3);
      \draw (b) -- (x) -- (a);
      \draw (x) -- (y);
      \draw (c) -- (y) -- (d);
      \draw[fill=white] (x) circle (0.05);
      \fill (y) circle (0.05);
      \fill (a) circle (0.05);
      \fill (b) circle (0.05);
      \draw[fill=white] (c) circle (0.05);
      \draw[fill=white] (d) circle (0.05);
      \node[above] at (x) {$x$};
      \node[below] at (y) {$y$};
      \node[left] at (a) {$a$};
      \node[left] at (c) {$c$};
      \node[right] at (b) {$b$};
      \node[right] at (d) {$d$};
      \node at (-2,0.5) {$A$};
      \node at (2,0.5) {$B$};
    \end{tikzpicture}
    \caption{Case 1---a connected bipartite cubic graph $G$ with ladder-bridge $xy$}
    \label{fig:ladder-bridge}
  \end{figure}

  \textbf{Subcase 1-1.} Suppose either $a$ and $c$ are connected by an edge or $b$ and $d$ are connected by an edge. Without loss of generality, we assume that $b, d$ are connected by an edge. Let the neighbors of $b$ be $x, d, e$, and the neighbors of $d$ be $y, b, f$, as in Figure~\ref{fig:double-ladder-bridge}. We consider two other graphs: $G^\prime$, which is the graph obtained by removing the vertices $x, y$ and connecting $ad, cb$, and $G^{\prime\prime}$, which is the graph obtained by removing vertices $x, y, b, d$ and connecting $ae, cf$, see Figure~\ref{fig:double-ladder-bridge-resolution}. Note that both $G^\prime, G^{\prime\prime}$ are again simple connected bipartite cubic graphs, where connectivity of $G^{\prime\prime}$ follows from the fact that there is a path in $A$ connecting $a, c$. Any $2$-factor of $G$ will either contain both $ax, cy$ or contain neither, because these are the only edges connecting a vertex in $A$ and a vertex not in $A$. Using this, we see that there are five possible shapes a $2$-factor of $G$ can take on the induced subgraph on $\{ a,b,c,d,e,f,x,y \}$, listed on the leftmost column of Figure~\ref{fig:double-ladder-bridge-resolution}. Similarly, we can list all the possible shapes a $2$-factor of $G^\prime$ or $G^{\prime\prime}$ can have on the induced subgraph of $\{a,b,c,d,e,f\}$ or $\{a,c,e,f\}$, and these are depicted on the rightmost column of Figure~\ref{fig:double-ladder-bridge-resolution}. 

  \begin{figure}[t]
    \centering
    \begin{tikzpicture}[scale=1]
      \TAA
      \fill[gray!30!white] (-2.8,-0.3) rectangle (-0.7,1.3);
      \fill[gray!30!white] (0.7,-0.3) rectangle (3.3,1.3);
      \draw (e) -- (b) -- (x) -- (a);
      \draw (x) -- (y);
      \draw (b) -- (d);
      \draw (c) -- (y) -- (d) -- (f) ;
      \foreach \x in {(a),(b),(f),(y)} {
	\fill \x circle (0.05);
      }
      \foreach \x in {(c),(d),(e),(x)} {
	\draw[fill=white] \x circle (0.05);
      }
      \node[above] at (x) {$x$};
      \node[below] at (y) {$y$};
      \node[left] at (a) {$a$};
      \node[left] at (c) {$c$};
      \node[above] at (b) {$b$};
      \node[below] at (d) {$d$};
      \node[right] at (e) {$e$};
      \node[right] at (f) {$f$};
      \node at (-2,0.5) {$A$};
      \node at (2.5,0.5) {$B$};
    \end{tikzpicture}
    \caption{Subcase 1-1---when $xy$ is next to another ladder-bridge $bd$}
    \label{fig:double-ladder-bridge}
  \end{figure}
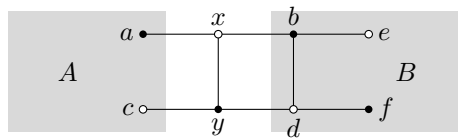

  We now modify the $2$-factor of $G$ to either a $2$-factor of $G^\prime$ or a $2$-factor of $G^{\prime\prime}$, by simply removing all the edges between $a,b,c,d,e,f,x,y$ and filling in that part with an appropriate diagram. If we do this process by taking the $i$th diagram on the leftmost column of Figure~\ref{fig:double-ladder-bridge-resolution} replacing with the $i$th diagram on the rightmost column of Figure~\ref{fig:double-ladder-bridge-resolution}. The process is reversible, as we can similarly take the right hand side diagram and replace it with the left hand side diagram. This shows that
  \[
    \Fac(G) = \Fac(G^\prime) + \Fac(G^{\prime\prime}).
  \]
  The inductive hypothesis applies to both $G^\prime$ and $G^{\prime\prime}$, as they are simple connected bipartite cubic graphs with number of vertices less than $2n$. Therefore
  \[
    \Fac(G) = \Fac(G^\prime) + \Fac(G^{\prime\prime}) \le m_{n-1} + m_{n-2} = m_n,
  \]
  where the last equality holds by Lemma~\ref{lem:inequalities-for-m}, \ref{itm:ineq-viii}, since $n \ge 9$. 
  
  When does equality hold? Again by the inductive hypothesis, equality holds if and only if $G^\prime$ is isomorphic to the graph $M_{n-1}$ and $G^{\prime\prime}$ is isomorphic to the graph $M_{n-2}$. In $G^\prime$, the edge $bd$ is a ladder-bridge. To recover $G$ from $G^\prime$, we need to insert another ladder-bridge, and it is easy to verify that the resulting graph is always isomorphic to $M_n$. 

  \begin{figure}[t]
    \centering
    \begin{tikzpicture}
      \begin{scope}[scale=0.7]
	\TAA
	\draw (e) -- (b) -- (x) -- (a);
	\draw (x) -- (y);
	\draw (b) -- (d);
	\draw (c) -- (y) -- (d) -- (f) ;
	\foreach \x in {(a),(b),(f),(y)} {
	  \fill \x circle (0.05);
	}
	\foreach \x in {(c),(d),(e),(x)} {
	  \draw[fill=white] \x circle (0.05);
	}
	\node at (0.5,-0.5) {$G$};
      \end{scope}
      \begin{scope}[shift={(4,1)},scale=0.7]
	\coordinate (a) at (-1,1);
	\coordinate (c) at (-1,0);
	\coordinate (b) at (1,1);
	\coordinate (d) at (1,0);
	\coordinate (e) at (2,1);
	\coordinate (f) at (2,0);
	\draw (e) -- (b) -- (c);
	\draw (b) -- (d);
	\draw (a) -- (d) -- (f) ;
	\foreach \x in {(a),(b),(f)} {
	  \fill \x circle (0.05);
	}
	\foreach \x in {(c),(d),(e)} {
	  \draw[fill=white] \x circle (0.05);
	}
	\node at (0.5,-0.5) {$G^\prime$};
      \end{scope}
      \begin{scope}[shift={(4,-1)},scale=0.7]
	\coordinate (a) at (-1,1);
	\coordinate (c) at (-1,0);
	\coordinate (e) at (2,1);
	\coordinate (f) at (2,0);
	\draw (e) -- (a);
	\draw (c) -- (f) ;
	\foreach \x in {(a),(f)} {
	  \fill \x circle (0.05);
	}
	\foreach \x in {(c),(e)} {
	  \draw[fill=white] \x circle (0.05);
	}
	\node at (0.5,-0.5) {$G^{\prime\prime}$};
      \end{scope}
      \draw[->] (1.8,0) -- (2.9,-0.3);
      \draw[->] (1.8,0.7) -- (2.9,1);

      \begin{scope}[shift={(-3,2.0875)},scale=0.5]
	\begin{scope}[shift={(0,0)}]
	  \TAA
	  \draw[thick] (a) -- (x) -- (y) -- (c);
	  \draw[thick] (f) -- (d) -- (b) -- (e);
	  \draw[dashed] (x) -- (b);
	  \draw[dashed] (d) -- (y);
	\end{scope}
	\begin{scope}[shift={(0,-2)}]
	  \TAA
	  \draw[thick] (a) -- (x) -- (b) -- (d) -- (y) -- (c);
	  \draw[dashed] (b) -- (e);
	  \draw[dashed] (x) -- (y);
	  \draw[dashed] (d) -- (f);
	\end{scope}
	\begin{scope}[shift={(0,-4)}]
	  \TAA
	  \draw[thick] (e) -- (b) -- (x) -- (y) -- (d) -- (f);
	  \draw[dashed] (b) -- (d);
	  \draw[dashed] (x) -- (a);
	  \draw[dashed] (c) -- (y);
	\end{scope}
	\begin{scope}[shift={(0,-6)}]
	  \TAA
	  \draw[thick] (a) -- (x) -- (b) -- (e);
	  \draw[thick] (c) -- (y) -- (d) -- (f);
	  \draw[dashed] (x) -- (y);
	  \draw[dashed] (b) -- (d);
	\end{scope}
	\begin{scope}[shift={(0,-8)}]
	  \TAA
	  \draw[thick] (x) -- (b) -- (d) -- (y) -- cycle;
	  \draw[dashed] (a) -- (x);
	  \draw[dashed] (c) -- (y);
	  \draw[dashed] (b) -- (e);
	  \draw[dashed] (d) -- (f);
	\end{scope}
      \end{scope}

      \begin{scope}[shift={(7.4,2.1)},scale=0.5]
	\begin{scope}[shift={(0,0)}]
	  \TAAMA
	  \draw[thick] (a) -- (d) -- (f);
	  \draw[thick] (c) -- (b) -- (e);
	  \draw[dashed] (b) -- (d);
	\end{scope}
	\begin{scope}[shift={(0,-2)}]
	  \TAAMA
	  \draw[thick] (a) -- (d) -- (b) -- (c);
	  \draw[dashed] (b) -- (e);
	  \draw[dashed] (d) -- (f);
	\end{scope}
	\begin{scope}[shift={(0,-4)}]
	  \TAAMA
	  \draw[thick] (e) -- (b) -- (d) -- (f);
	  \draw[dashed] (b) -- (c);
	  \draw[dashed] (a) -- (d);
	\end{scope}
	\begin{scope}[shift={(0,-6)}]
	  \TAAMB
	  \draw[thick] (a) -- (e);
	  \draw[thick] (c) -- (f);
	\end{scope}
	\begin{scope}[shift={(0,-8)}]
	  \TAAMB
	  \draw[dashed] (a) -- (e);
	  \draw[dashed] (c) -- (f);
	\end{scope}
      \end{scope}
    \end{tikzpicture}
    \caption{Subcase 1-1---modifying the graph $G$ to $G^\prime$ and $G^{\prime\prime}$}
    \label{fig:double-ladder-bridge-resolution}
  \end{figure}

  \textbf{Subcase 1-2.} Suppose now that there is no edge between $a, c$ and also between $b, d$. This time, we modify the graph $G$ to $G^\prime$ by removing the vertices $x,y$ and then connecting $a,c$ and $b,d$, see Figure~\ref{fig:isolated-ladder-bridge-resolution}. Then $G^\prime$ is a simple bipartite cubic graph, even though it is not connected. Using the same process of replacing the $i$th configuraiton of the leftmost column with the $i$th configuration of the rightmost column, from each $2$-factor of $G$ we get a $2$-factor of $G^\prime$. Moreover, it is clear that distinct $2$-factors of $G$ give distinct $2$-factors of $G^\prime$, even though some $2$-factors of $G^\prime$ do not appear by this process. Therefore $\Fac(G) \le \Fac(G^\prime)$. On the other hand, $G^\prime$ has two components, say $G^\prime = G_1^\prime \cup G_2^\prime$, where both $G_1^\prime, G_2^\prime$ are simple connected bipartite cubic graphs. Then the inductive hypothesis applies, so
  \[
    \Fac(G) \le \Fac(G^\prime) = \Fac(G_1^\prime) \Fac(G_2^\prime) \le m_a m_{n-a-1}
  \]
  where $G_1^\prime$ has $2a$ vertices and $G_2^\prime$ has $2(n-a-1)$ vertices. Because $n \ge 9$, from Lemma~\ref{lem:inequalities-for-m}, \ref{itm:ineq-vii}, we obtain
  \[
    \Fac(G) \le m_a m_{n-a-1} < m_n.
  \]

  \begin{figure}[t]
    \centering
    \begin{tikzpicture}
      \begin{scope}[scale=0.9]
	\TAB
	\draw (b) -- (x) -- (a);
	\draw (x) -- (y);
	\draw (c) -- (y) -- (d);
	\draw[fill=white] (x) circle (0.05);
	\fill (y) circle (0.05);
	\fill (a) circle (0.05);
	\fill (b) circle (0.05);
	\draw[fill=white] (c) circle (0.05);
	\draw[fill=white] (d) circle (0.05);
	\node at (0,-0.5) {$G$};
      \end{scope}
      \begin{scope}[shift={(4,0)},scale=0.9]
	\TABM
	\draw (c) arc [radius=0.5, start angle=-90, end angle=90];
	\draw (b) arc [radius=0.5, start angle=90, end angle=270];
	\fill (a) circle (0.05);
	\fill (b) circle (0.05);
	\draw[fill=white] (c) circle (0.05);
	\draw[fill=white] (d) circle (0.05);
	\node at (0,-0.5) {$G^\prime$};
      \end{scope}
      \draw[->] (1.5,0.45) -- (2.5,0.45);

      \begin{scope}[shift={(-3,1.5)},scale=0.7]
	\begin{scope}[shift={(0,0)}]
	  \TAB
	  \draw[thick] (a) -- (x) -- (b);
	  \draw[thick] (c) -- (y) -- (d);
	  \draw[dashed] (x) -- (y);
	\end{scope}
	\begin{scope}[shift={(0,-2)}]
	  \TAB
	  \draw[thick] (a) -- (x) -- (y) -- (c);
	  \draw[dashed] (x) -- (b);
	  \draw[dashed] (y) -- (d);
	\end{scope}
	\begin{scope}[shift={(0,-4)}]
	  \TAB
	  \draw[thick] (b) -- (x) -- (y) -- (d);
	  \draw[dashed] (c) -- (y);
	  \draw[dashed] (x) -- (a);
	\end{scope}
      \end{scope}

      \begin{scope}[shift={(7,1.5)},scale=0.7]
	\begin{scope}[shift={(0,0)}]
	  \TABM
	  \draw[thick] (c) arc [radius=0.5, start angle=-90, end angle=90];
	  \draw[thick] (b) arc [radius=0.5, start angle=90, end angle=270];
	\end{scope}
	\begin{scope}[shift={(0,-2)}]
	  \TABM
	  \draw[thick] (c) arc [radius=0.5, start angle=-90, end angle=90];
	  \draw[dashed] (b) arc [radius=0.5, start angle=90, end angle=270];
	\end{scope}
	\begin{scope}[shift={(0,-4)}]
	  \TABM
	  \draw[dashed] (c) arc [radius=0.5, start angle=-90, end angle=90];
	  \draw[thick] (b) arc [radius=0.5, start angle=90, end angle=270];
	\end{scope}
      \end{scope}
    \end{tikzpicture}
    \caption{Subcase 1-2---modifying the graph $G$ to $G^\prime$}
    \label{fig:isolated-ladder-bridge-resolution}
  \end{figure}

  \textbf{Case 2.} Now suppose there exists a $4$-cycle $xyzw$ in $G$. Denote by $a,y,w$ the neighbors of $x$, by $b,x,z$ the neighbors of $y$, by $c,y,w$ the neighbors of $z$, and by $d,x,z$ the neighbors of $w$, see Figure~\ref{fig:4-cycle}. By definition, $x,y,z,w$ are all distinct points, but there is no reason for $a,b,c,d$ to be all distinct. It is possible that $a = c$ or $b = d$ or both. 

  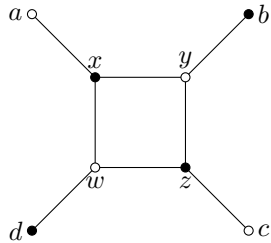
\begin{figure}[t]
    \centering
    \begin{tikzpicture}[scale=1.2]
      \coordinate (x) at (-0.5,0.5);
      \coordinate (y) at (0.5,0.5);
      \coordinate (z) at (0.5,-0.5);
      \coordinate (w) at (-0.5,-0.5);
      \coordinate (a) at (-1.2,1.2);
      \coordinate (b) at (1.2,1.2);
      \coordinate (c) at (1.2,-1.2);
      \coordinate (d) at (-1.2,-1.2);
      \draw (x) -- (y) -- (z) -- (w) -- cycle;
      \draw (a) -- (x);
      \draw (b) -- (y);
      \draw (c) -- (z);
      \draw (d) -- (w);
      \draw[fill=black] (x) circle (0.05);
      \draw[fill=white] (y) circle (0.05);
      \draw[fill=black] (z) circle (0.05);
      \draw[fill=white] (w) circle (0.05);
      \draw[fill=white] (a) circle (0.05);
      \draw[fill=black] (b) circle (0.05);
      \draw[fill=white] (c) circle (0.05);
      \draw[fill=black] (d) circle (0.05);
      \node[above] at (x) {$x$};
      \node[above] at (y) {$y$};
      \node[below] at (z) {$z$};
      \node[below] at (w) {$w$};
      \node[left] at (a) {$a$};
      \node[right] at (b) {$b$};
      \node[right] at (c) {$c$};
      \node[left] at (d) {$d$};
    \end{tikzpicture}
    \caption{Case 2---a cubic bipartite $G$ with a $4$-cycle $xyzw$}
    \label{fig:4-cycle}
  \end{figure}

  \textbf{Subcase 2-1.} First consider the case when $a = c$ and $b = d$. Now the vertices $a, b, x, y, z, w$ are all distinct. Moreover, $a$ cannot be connected to $b$ by an edge, otherwise we would have $V(G) = \{a,b,x,y,z,w\}$ and $G = K_{3,3}$, contradicting our assumption that $n \ge 9$. Denote by $x,z,e$ the neighbors of $a$ and $y,w,f$ the neighbors of $f$, see Figure~\ref{fig:4-cycle-K33-inserted}. As $a$ and $b$ are not neighbors, we see that all the vertices $a,b,e,f,x,y,z,w$ are distinct. If $e$ and $f$ are connected, then $ef$ becomes a ladder-bridge of $G$, hence this case is already covered in Case~1. Therefore we assume without loss of generality that $e$ and $f$ are not connected by an edge. 

  \begin{figure}[t]
    \centering
    \begin{tikzpicture}[scale=1.2]
      \TBA
      \draw (x) -- (y) -- (z) -- (w) -- cycle;
      \draw (e) -- (a) -- (x);
      \draw (a) -- (z);
      \draw (f) -- (b) -- (y);
      \draw (w) -- (b);
      \foreach \p in {(x), (z), (b), (e)} {
	\draw[fill=black] \p circle (0.05);
      }
      \foreach \p in {(y), (w), (a), (f)} {
	\draw[fill=white] \p circle (0.05);
      }
      \node[above] at (x) {$x$};
      \node[above] at (y) {$y$};
      \node[below] at (z) {$z$};
      \node[below] at (w) {$w$};
      \node[above] at (a) {$a$};
      \node[above] at (b) {$b$};
      \node[left] at (e) {$e$};
      \node[right] at (f) {$f$};
    \end{tikzpicture}
    \caption{Subcase 2-1---when $a=c$ and $b=d$}
    \label{fig:4-cycle-K33-inserted}
  \end{figure}
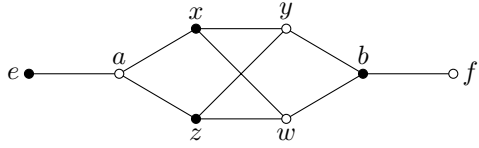

  Given such a graph $G$, we define a new graph $G^\prime$ by removing the vertices $a,b,x,y,z,w$ and then connecting $e$ and $f$. The resulting graph is simple, as $e$ and $f$ were not already connected, and also connected cubic bipartite. We again list the possible configurations of $2$-factors of $G$ restricted to this portion, of which there are $6$ as listed on the leftmost column of Figure~\ref{fig:4-cycle-K33-inserted-resolution}. We may again use the process of replacing the configurations of the leftmost column by configurations of the rightmost column. This time, the process is not injective, but every $2$-factor of $G^\prime$ can occur in at most $4$ different ways. This shows that
  \[
    \Fac(G) \le 4 \Fac(G^\prime) \le 4 m_{n-3} < m_n
  \]
  by Lemma~\ref{lem:inequalities-for-m}, \ref{itm:ineq-iii}. 

  \begin{figure}[t]
    \centering
    \begin{tikzpicture}[scale=0.9]
      \begin{scope}[scale=0.7]
	\TBA
	\draw (x) -- (y) -- (z) -- (w) -- cycle;
	\draw (e) -- (a) -- (x);
	\draw (a) -- (z);
	\draw (f) -- (b) -- (y);
	\draw (w) -- (b);
	\foreach \p in {(x), (z), (b), (e)} {
	  \draw[fill=black] \p circle (0.05);
	}
	\foreach \p in {(y), (w), (a), (f)} {
	  \draw[fill=white] \p circle (0.05);
	}
	\node at (0,-1) {$G$};
      \end{scope}
      \begin{scope}[shift={(5,0)},scale=0.7]
	\TBAM
	\draw (e) -- (f);
	\foreach \p in {(e)} {
	  \draw[fill=black] \p circle (0.05);
	}
	\foreach \p in {(f)} {
	  \draw[fill=white] \p circle (0.05);
	}
	\node at (0,-1) {$G^\prime$};
      \end{scope}
      \draw[->] (2,0) -- (3,0);

      \begin{scope}[shift={(-3,2.5)},scale=0.5]
	\begin{scope}[shift={(0,0)}]
	  \TBA
	  \draw[thick] (e) -- (a) -- (x) -- (w) -- (z) -- (y) -- (b) -- (f);
	  \draw[dashed] (a) -- (z);
	  \draw[dashed] (b) -- (w);
	  \draw[dashed] (x) -- (y);
	\end{scope}
	\begin{scope}[shift={(0,-2)}]
	  \TBA
	  \draw[thick] (e) -- (a) -- (x) -- (y) -- (z) -- (w) -- (b) -- (f);
	  \draw[dashed] (a) -- (z);
	  \draw[dashed] (b) -- (y);
	  \draw[dashed] (x) -- (w);
	\end{scope}
	\begin{scope}[shift={(0,-4)}]
	  \TBA
	  \draw[thick] (e) -- (a) -- (z) -- (w) -- (x) -- (y) -- (b) -- (f);
	  \draw[dashed] (a) -- (x);
	  \draw[dashed] (b) -- (w);
	  \draw[dashed] (z) -- (y);
	\end{scope}
	\begin{scope}[shift={(0,-6)}]
	  \TBA
	  \draw[thick] (e) -- (a) -- (z) -- (y) -- (x) -- (w) -- (b) -- (f);
	  \draw[dashed] (a) -- (x);
	  \draw[dashed] (b) -- (y);
	  \draw[dashed] (z) -- (w);
	\end{scope}
	\begin{scope}[shift={(0,-8)}]
	  \TBA
	  \draw[thick] (x) -- (y) -- (b) -- (w) -- (z) -- (a) -- cycle;
	  \draw[dashed] (a) -- (e);
	  \draw[dashed] (b) -- (f);
	  \draw[dashed] (x) -- (w);
	  \draw[dashed] (y) -- (z); 
	\end{scope}
	\begin{scope}[shift={(0,-10)}]
	  \TBA
	  \draw[thick] (x) -- (w) -- (b) -- (y) -- (z) -- (a) -- cycle;
	  \draw[dashed] (a) -- (e);
	  \draw[dashed] (b) -- (f);
	  \draw[dashed] (x) -- (y);
	  \draw[dashed] (w) -- (z); 
	\end{scope}
      \end{scope}

      \begin{scope}[shift={(8,2.5)},scale=0.5]
	\begin{scope}[shift={(0,0)}]
	  \TBAM
	  \draw[thick] (e) -- (f);
	\end{scope}
	\begin{scope}[shift={(0,-2)}]
	  \TBAM
	  \draw[thick] (e) -- (f);
	\end{scope}
	\begin{scope}[shift={(0,-4)}]
	  \TBAM
	  \draw[thick] (e) -- (f);
	\end{scope}
	\begin{scope}[shift={(0,-6)}]
	  \TBAM
	  \draw[thick] (e) -- (f);
	\end{scope}
	\begin{scope}[shift={(0,-8)}]
	  \TBAM
	  \draw[dashed] (e) -- (f);
	\end{scope}
	\begin{scope}[shift={(0,-10)}]
	  \TBAM
	  \draw[dashed] (e) -- (f);
	\end{scope}
      \end{scope}
    \end{tikzpicture}
    \caption{Subcase 2-1---modifying the graph $G$ to $G^\prime$}
    \label{fig:4-cycle-K33-inserted-resolution}
  \end{figure}
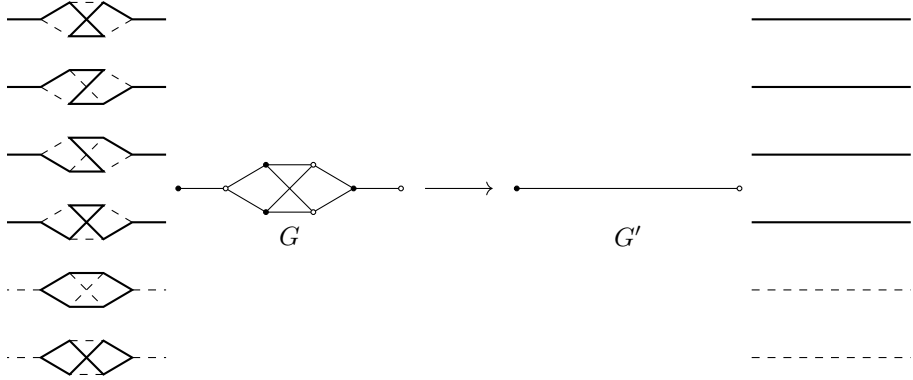

  \textbf{Subcase 2-2.} Now suppose that only one of the equalities from $a = c$ and $b = d$ hold. Without loss of generality, assume that $a = c$ and $b \neq d$. Denote by $x,z,e$ the neighbors of $a$, see Figure~\ref{fig:4-cycle-bipyramid}. It is clear that $a,x,y,z,w$ are all distinct, and also distinct from $b, d, e$. The only possible equalities between the points $a,b,d,e,x,y,z,w$ are $d = e$ or $b = e$. But if any of these equalities holds, we are reduced to Subcase~2-1. For instance, if $b = e$ then $axyz$ becomes a $4$-cycle satisfying the assumptions of Subcase~2-1. Therefore we may as well assume that all the vertices $a,b,d,e,x,y,z,w$ are distinct. 

  \begin{figure}[t]
    \centering
    \begin{tikzpicture}
      \TBB
      \draw (b) -- (y) -- (x) -- (a) -- (z) -- (w) -- (d);
      \draw (x) -- (w);
      \draw (z) -- (y);
      \draw (a) -- (e);
      \foreach \p in {(a),(y),(w)} {
	\draw[fill=white] \p circle (0.05);
      }
      \foreach \p in {(b),(d),(e),(x),(z)} {
	\draw[fill=black] \p circle (0.05);
      }
      \node[left] at (x) {$x$};
      \node[right] at (z) {$z$};
      \node[below] at (y) {$y$};
      \node[below] at (w) {$w$};
      \node[left] at (a) {$a$};
      \node[left] at (e) {$e$};
      \node[left] at (b) {$b$};
      \node[right] at (d) {$d$};
    \end{tikzpicture}
    \caption{Subcase 2-2---when $a = c$}
    \label{fig:4-cycle-bipyramid}
  \end{figure}
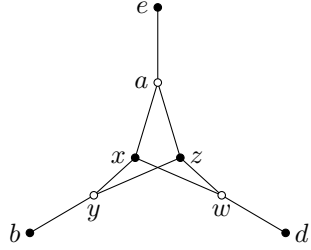

  We now construct a new graph $G^\prime$ by deleting the vertices $x,y,z,w$ and then connecting $a$ with $b$ and $d$. As $b,d,e$ are distinct points, the resulting graph is simple. That is, $G^\prime$ is a simple connected bipartite cubic graph, hence satisfies the induction hypothesis. We list the possible $2$-factors of $G$ and consider the replacing process as given by Figure~\ref{fig:4-cycle-bipyramid-resolution}. We observe that each $2$-factor of $G^\prime$ can occur in exactly $2$ ways, and therefore
  \[
    \Fac(G) = 2 \Fac(G^\prime) \le 2 m_{n-2} < m_n
  \]
  by Lemma~\ref{lem:inequalities-for-m}, \ref{itm:ineq-ii}. 

  \begin{figure}[t]
    \centering
    \begin{tikzpicture}
      \begin{scope}[scale=0.7]
	\TBB
	\draw (b) -- (y) -- (x) -- (a) -- (z) -- (w) -- (d);
	\draw (x) -- (w);
	\draw (z) -- (y);
	\draw (a) -- (e);
	\foreach \p in {(a),(y),(w)} {
	  \draw[fill=white] \p circle (0.05);
	}
	\foreach \p in {(b),(d),(e),(x),(z)} {
	  \draw[fill=black] \p circle (0.05);
	}
	\node at (0,-1) {$G$};
      \end{scope}
      \begin{scope}[shift={(3.5,0)},scale=0.7]
	\TBBM
	\draw (b) -- (a) -- (d);
	\draw (a) -- (e);
	\foreach \p in {(a)} {
	  \draw[fill=white] \p circle (0.05);
	}
	\foreach \p in {(b),(d),(e)} {
	  \draw[fill=black] \p circle (0.05);
	}
	\node at (0,-1) {$G^\prime$};
      \end{scope}
      \draw[->] (1.2,0) -- (2.3,0);

      \begin{scope}[shift={(-3.5,1.875)},scale=0.5]
	\begin{scope}[shift={(0,0)}]
	  \TBB
	  \draw[thick] (b) -- (y) -- (x) -- (a) -- (z) -- (w) -- (d);
	  \draw[dashed] (x) -- (w);
	  \draw[dashed] (y) -- (z);
	  \draw[dashed] (a) -- (e);
	\end{scope}
	\begin{scope}[shift={(2.6,-1.5)}]
	  \TBB
	  \draw[thick] (b) -- (y) -- (z) -- (a) -- (x) -- (w) -- (d);
	  \draw[dashed] (x) -- (y);
	  \draw[dashed] (w) -- (z);
	  \draw[dashed] (a) -- (e);
	\end{scope}
	\begin{scope}[shift={(0,-3)}]
	  \TBB
	  \draw[thick] (b) -- (y) -- (z) -- (w) -- (x) -- (a) -- (e);
	  \draw[dashed] (x) -- (y);
	  \draw[dashed] (a) -- (z);
	  \draw[dashed] (d) -- (w);
	\end{scope}
	\begin{scope}[shift={(2.6,-4.5)}]
	  \TBB
	  \draw[thick] (b) -- (y) -- (x) -- (w) -- (z) -- (a) -- (e);
	  \draw[dashed] (z) -- (y);
	  \draw[dashed] (a) -- (x);
	  \draw[dashed] (d) -- (w);
	\end{scope}
	\begin{scope}[shift={(0,-6)}]
	  \TBB
	  \draw[thick] (d) -- (w) -- (x) -- (y) -- (z) -- (a) -- (e);
	  \draw[dashed] (z) -- (w);
	  \draw[dashed] (a) -- (x);
	  \draw[dashed] (b) -- (y);
	\end{scope}
	\begin{scope}[shift={(2.6,-7.5)}]
	  \TBB
	  \draw[thick] (d) -- (w) -- (z) -- (y) -- (x) -- (a) -- (e);
	  \draw[dashed] (x) -- (w);
	  \draw[dashed] (a) -- (z);
	  \draw[dashed] (b) -- (y);
	\end{scope}
      \end{scope}

      \begin{scope}[shift={(5.7,1.875)},scale=0.5]
	\begin{scope}[shift={(0,0)}]
	  \TBBM
	  \draw[thick] (b) -- (a) -- (d);
	  \draw[dashed] (a) -- (e);
	\end{scope}
	\begin{scope}[shift={(2.6,-1.5)}]
	  \TBBM
	  \draw[thick] (b) -- (a) -- (d);
	  \draw[dashed] (a) -- (e);
	\end{scope}
	\begin{scope}[shift={(0,-3)}]
	  \TBBM
	  \draw[thick] (b) -- (a) -- (e);
	  \draw[dashed] (a) -- (d);
	\end{scope}
	\begin{scope}[shift={(2.6,-4.5)}]
	  \TBBM
	  \draw[thick] (b) -- (a) -- (e);
	  \draw[dashed] (a) -- (d);
	\end{scope}
	\begin{scope}[shift={(0,-6)}]
	  \TBBM
	  \draw[thick] (d) -- (a) -- (e);
	  \draw[dashed] (a) -- (b);
	\end{scope}
	\begin{scope}[shift={(2.6,-7.5)}]
	  \TBBM
	  \draw[thick] (d) -- (a) -- (e);
	  \draw[dashed] (a) -- (b);
	\end{scope}
      \end{scope}
    \end{tikzpicture}
    \caption{Subcase 2-2---modifying the graph $G$ to $G^\prime$}
    \label{fig:4-cycle-bipyramid-resolution}
  \end{figure}
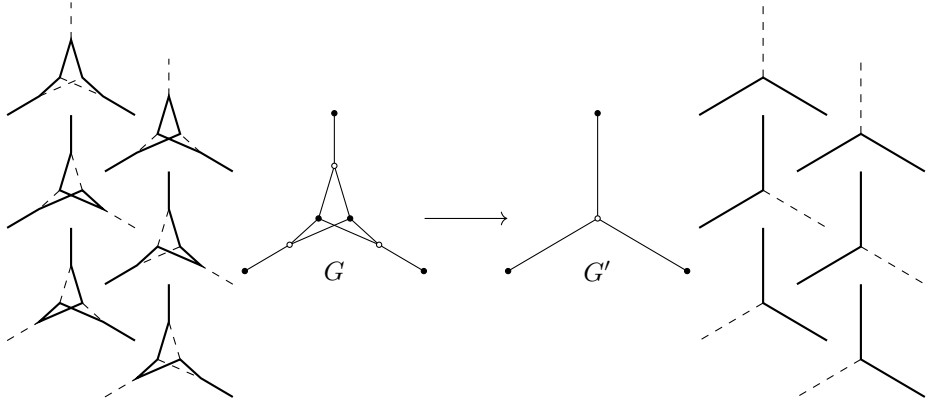

  \textbf{Subcase 2-3.} We are now left with the case when $a \neq c$ and $b \neq d$. In this Subcase, we consider the case when $\{ab, cd\} \cap E(G) \neq \emptyset$ and $\{bc, ad\} \cap E(G) \neq \emptyset$. Then without loss of generality, we can assume that $a$ is connected to both $b$ and $d$ by edges. Denote by $e,a,y$ the neighbors of $b$ and by $f,a,w$ the neighbors of $d$. The vertices $c, e, f$ cannot be all equal, because then we would have $V(G) = \{a,b,c,d,x,y,z,w\}$ which contradicts $n \ge 9$. Thus either the three vertices $c,e,f$ are all distinct, or two of them are equal and distinct from the last one. 

  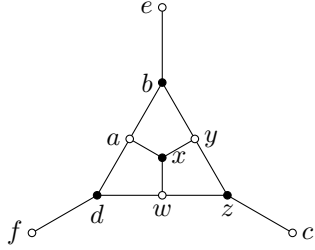
\begin{figure}[t]
    \centering
    \begin{tikzpicture}
      \TBC
      \draw (a) -- (x) -- (y) -- (z) -- (w) -- (d) -- (a) -- (b) -- (e);
      \draw (b) -- (y);
      \draw (d) -- (f);
      \draw (c) -- (z);
      \draw (x) -- (w);
      \foreach \p in {(a),(y),(w),(c),(e),(f)} {
	\draw[fill=white] \p circle (0.05);
      }
      \foreach \p in {(x),(z),(b),(d)} {
	\draw[fill=black] \p circle (0.05);
      }
      \node[right] at (x) {$x$};
      \node[right] at (y) {$y$};
      \node[below] at (z) {$z$};
      \node[below] at (w) {$w$};
      \node[left] at (a) {$a$};
      \node[left] at (b) {$b$};
      \node[right] at (c) {$c$};
      \node[below] at (d) {$d$};
      \node[left] at (e) {$e$};
      \node[left] at (f) {$f$};
    \end{tikzpicture}
    \caption{Subcase 2-3---when $a \neq c$ and $b \neq d$ but $a$ is connected to $b,d$}
    \label{fig:4-cycle-cube}
  \end{figure}

  \textbf{Subsubcase 2-3-1.} We first assume that two of $c,e,f$ are equal, but not all of them coincide. Here, by symmetry of Figure~\ref{fig:4-cycle-cube}, we may as well assume that $e = f$ but $c \neq e$. Let us denote by $g,b,d$ the neighbors of $e = f$. Then all the $10$ vertices $a,b,c,d,e,g,x,y,z,w$ are distinct. We now use the exact same strategy as Subcase~2-1. If $g$ and $c$ are connected by an edge, then the edge $cg$ becomes a ladder-bridge of $G$, hence we can deal with it using Case~1. If $g$ and $c$ are not connected by an edge, consider the graph $G^\prime$ obtained from $G$ by deleting the vertices $a,b,d,e,x,y,z,w$ and connecting $c$ and $g$ by an edge. Then $G^\prime$ is a simple connected bipartite cubic graph, hence the inductive hypothesis applies. From Figure~\ref{fig:4-cycle-cube-resolution1}, we see that
  \[
    \Fac(G) \le 6 \Fac(G^\prime) \le 6 m_{n-4} < m_n
  \]
  by Lemma~\ref{lem:inequalities-for-m}, \ref{itm:ineq-iv}. 

  \begin{figure}[t]
    \centering
    \begin{tikzpicture}
      \begin{scope}[scale=0.7]
	\TBCA
	\draw (c) -- (z) -- (w) -- (d) -- (e) -- (b) -- (y) -- (z);
	\draw (y) -- (x) -- (w);
	\draw (b) -- (a) -- (d);
	\draw (a) -- (x);
	\draw (g) -- (e);
	\foreach \p in {(a),(c),(e),(y),(w)} {
	  \draw[fill=white] \p circle (0.05);
	}
	\foreach \p in {(b),(d),(g),(x),(z)} {
	  \draw[fill=black] \p circle (0.05);
	}
	\node at (0,-1.5) {$G$};
      \end{scope}
      \begin{scope}[shift={(6.2,0)},scale=0.7]
	\TBCAM
	\draw (g) -- (c);
	\draw[fill=white] (c) circle (0.05);
	\draw[fill=black] (g) circle (0.05);
	\node at (0,-1.5) {$G^\prime$};
      \end{scope}
      \draw[->] (2.5,0) -- (3.7,0);

      \begin{scope}[shift={(0,-3)},scale=0.33]
	\begin{scope}[shift={(-6,3)}]
	  \TBCA
	  \draw[thick] (g) -- (e) -- (b) -- (a) -- (d) -- (w) -- (x) -- (y) -- (z) -- (c);
	  \draw[dashed] (e) -- (d);
	  \draw[dashed] (b) -- (y);
	  \draw[dashed] (a) -- (x);
	  \draw[dashed] (w) -- (z);
	\end{scope}
	\begin{scope}[shift={(0,3)}]
	  \TBCA
	  \draw[thick] (g) -- (e) -- (b) -- (y) -- (z) -- (c);
	  \draw[thick] (a) -- (x) -- (w) -- (d) -- cycle;
	  \draw[dashed] (e) -- (d);
	  \draw[dashed] (b) -- (a);
	  \draw[dashed] (y) -- (x);
	  \draw[dashed] (w) -- (z);
	\end{scope}
	\begin{scope}[shift={(6,3)}]
	  \TBCA
	  \draw[thick] (g) -- (e) -- (b) -- (y) -- (x) -- (a) -- (d) -- (w) -- (z) -- (c);
	  \draw[dashed] (e) -- (d);
	  \draw[dashed] (b) -- (a);
	  \draw[dashed] (w) -- (x);
	  \draw[dashed] (y) -- (z);
	\end{scope}
	\begin{scope}[shift={(-6,0)}]
	  \TBCA
	  \draw[thick] (g) -- (e) -- (d) -- (a) -- (b) -- (y) -- (x) -- (w) -- (z) -- (c);
	  \draw[dashed] (e) -- (b);
	  \draw[dashed] (x) -- (a);
	  \draw[dashed] (y) -- (z);
	  \draw[dashed] (w) -- (d);
	\end{scope}
	\begin{scope}[shift={(0,0)}]
	  \TBCA
	  \draw[thick] (g) -- (e) -- (d) -- (w) -- (z) -- (c);
	  \draw[thick] (a) -- (x) -- (y) -- (b) -- cycle;
	  \draw[dashed] (e) -- (b);
	  \draw[dashed] (d) -- (a);
	  \draw[dashed] (w) -- (x);
	  \draw[dashed] (y) -- (z);
	\end{scope}
	\begin{scope}[shift={(6,0)}]
	  \TBCA
	  \draw[thick] (g) -- (e) -- (d) -- (w) -- (x) -- (a) -- (b) -- (y) -- (z) -- (c);
	  \draw[dashed] (e) -- (b);
	  \draw[dashed] (d) -- (a);
	  \draw[dashed] (y) -- (x);
	  \draw[dashed] (w) -- (z);
	\end{scope}
	\begin{scope}[shift={(-6,-3)}]
	  \TBCA
	  \draw[thick] (a) -- (b) -- (e) -- (d) -- cycle;
	  \draw[thick] (x) -- (y) -- (z) -- (w) -- cycle;
	  \draw[dashed] (g) -- (e);
	  \draw[dashed] (c) -- (z);
	  \draw[dashed] (b) -- (y);
	  \draw[dashed] (a) -- (x);
	  \draw[dashed] (d) -- (w);
	\end{scope}
	\begin{scope}[shift={(0,-3)}]
	  \TBCA
	  \draw[thick] (e) -- (d) -- (a) -- (x) -- (w) -- (z) -- (y) -- (b) -- cycle;
	  \draw[dashed] (g) -- (e);
	  \draw[dashed] (c) -- (z);
	  \draw[dashed] (a) -- (b);
	  \draw[dashed] (x) -- (y);
	  \draw[dashed] (d) -- (w);
	\end{scope}
	\begin{scope}[shift={(6,-3)}]
	  \TBCA
	  \draw[thick] (e) -- (b) -- (a) -- (x) -- (y) -- (z) -- (w) -- (d) -- cycle;
	  \draw[dashed] (g) -- (e);
	  \draw[dashed] (c) -- (z);
	  \draw[dashed] (a) -- (d);
	  \draw[dashed] (x) -- (w);
	  \draw[dashed] (b) -- (y);
	\end{scope}
      \end{scope}

      \begin{scope}[shift={(6.2,-3)},scale=0.33]
	\foreach \x/\y in {-6/3, 0/3, 6/3, -6/0, 0/0, 6/0} {
	  \begin{scope}[shift={(\x,\y)}]
	    \TBCAM
	    \draw[thick] (g) -- (c);
	  \end{scope}
	}
	\foreach \x/\y in {-6/-3, 0/-3, 6/-3} {
	  \begin{scope}[shift={(\x,\y)}]
	    \TBCAM
	    \draw[dashed] (g) -- (c);
	  \end{scope}
	}
      \end{scope}
    \end{tikzpicture}
    \caption{Subsubcase 2-3-1---modifying the graph $G$ to $G^\prime$}
    \label{fig:4-cycle-cube-resolution1}
  \end{figure}

  \textbf{Subsubcase 2-3-2.} We now suppose that $c,e,f$ are all distinct. This time we follow Subcase~2-2. Consider the graph $G^\prime$ obtained by deleting the vertices $a,d,x,y,z,w$ from $G$ and then connecting $b$ to both $c, f$. Since $c,e,f$ are distinct points, the modified graph $G^\prime$ is simple, and also connected bipartite cubic. We can list the possible $2$-factors of $G$ restricted to the subgraph as in Figure~\ref{fig:4-cycle-cube-resolution2}. Now every $2$-factor of $G^\prime$ can be obtained from a $2$-factor of $G$ in exactly $3$ ways, hence
  \[
    \Fac(G) = 3 \Fac(G^\prime) \le 3 m_{n-3} < 4 m_{n-3} < m_n
  \]
  by Lemma~\ref{lem:inequalities-for-m}, \ref{itm:ineq-iii}. 

  \begin{figure}[t]
    \centering
    \begin{tikzpicture}
      \begin{scope}[scale=0.7]
	\TBC
	\draw (c) -- (z) -- (w) -- (d) -- (a) -- (b) -- (y) -- (z);
	\draw (e) -- (b);
	\draw (y) -- (x) -- (a);
	\draw (x) -- (w);
	\draw (d) -- (f);
	\foreach \p in {(a),(c),(e),(f),(y),(w)} {
	  \draw[fill=white] \p circle (0.05);
	}
	\foreach \p in {(b),(d),(x),(z)} {
	  \draw[fill=black] \p circle (0.05);
	}
	\node at (-90:1.2) {$G$};
      \end{scope}
      \begin{scope}[shift={(6.2,0)},scale=0.7]
	\TBCBM
	\draw (c) -- (b) -- (f);
	\draw (b) -- (e);
	\foreach \p in {(c),(e),(f)} {
	  \draw[fill=white] \p circle (0.05);
	}
	\draw[fill=black] (b) circle (0.05);
	\node at (-90:1.2) {$G^\prime$};
      \end{scope}
      \draw[->] (2.5,0) -- (3.7,0);

      \begin{scope}[shift={(0,-3.7)},scale=0.45]
	\begin{scope}[shift={(-4,3)}]
	  \TBC
	  \draw[thick] (e) -- (b) -- (a) -- (d) -- (f);
	  \draw[thick] (x) -- (y) -- (z) -- (w) -- cycle;
	  \draw[dashed] (b) -- (y);
	  \draw[dashed] (a) -- (x);
	  \draw[dashed] (d) -- (w);
	  \draw[dashed] (c) -- (z);
	\end{scope}
	\begin{scope}[shift={(0,3)}]
	  \TBC
	  \draw[thick] (e) -- (b) -- (a) -- (x) -- (y) -- (z) -- (w) -- (d) -- (f);
	  \draw[dashed] (b) -- (y);
	  \draw[dashed] (a) -- (d);
	  \draw[dashed] (x) -- (w);
	  \draw[dashed] (c) -- (z);
	\end{scope}
	\begin{scope}[shift={(4,3)}]
	  \TBC
	  \draw[thick] (e) -- (b) -- (y) -- (z) -- (w) -- (x) -- (a) -- (d) -- (f);
	  \draw[dashed] (b) -- (a);
	  \draw[dashed] (w) -- (d);
	  \draw[dashed] (x) -- (y);
	  \draw[dashed] (c) -- (z);
	\end{scope}
	\begin{scope}[shift={(-4,0)}]
	  \TBC
	  \draw[thick] (e) -- (b) -- (y) -- (z) -- (c);
	  \draw[thick] (x) -- (w) -- (d) -- (a) -- cycle;
	  \draw[dashed] (b) -- (a);
	  \draw[dashed] (y) -- (x);
	  \draw[dashed] (z) -- (w);
	  \draw[dashed] (d) -- (f);
	\end{scope}
	\begin{scope}[shift={(0,0)}]
	  \TBC
	  \draw[thick] (e) -- (b) -- (a) -- (d) -- (w) -- (x) -- (y) -- (z) -- (c);
	  \draw[dashed] (b) -- (y);
	  \draw[dashed] (a) -- (x);
	  \draw[dashed] (z) -- (w);
	  \draw[dashed] (d) -- (f);
	\end{scope}
	\begin{scope}[shift={(4,0)}]
	  \TBC
	  \draw[thick] (e) -- (b) -- (y) -- (x) -- (a) -- (d) -- (w) -- (z) -- (c);
	  \draw[dashed] (b) -- (a);
	  \draw[dashed] (w) -- (x);
	  \draw[dashed] (z) -- (y);
	  \draw[dashed] (d) -- (f);
	\end{scope}
	\begin{scope}[shift={(-4,-3)}]
	  \TBC
	  \draw[thick] (f) -- (d) -- (w) -- (z) -- (c);
	  \draw[thick] (x) -- (y) -- (b) -- (a) -- cycle;
	  \draw[dashed] (d) -- (a);
	  \draw[dashed] (w) -- (x);
	  \draw[dashed] (z) -- (y);
	  \draw[dashed] (b) -- (e);
	\end{scope}
	\begin{scope}[shift={(0,-3)}]
	  \TBC
	  \draw[thick] (f) -- (d) -- (a) -- (b) -- (y) -- (x) -- (w) -- (z) -- (c);
	  \draw[dashed] (z) -- (y);
	  \draw[dashed] (a) -- (x);
	  \draw[dashed] (d) -- (w);
	  \draw[dashed] (b) -- (e);
	\end{scope}
	\begin{scope}[shift={(4,-3)}]
	  \TBC
	  \draw[thick] (f) -- (d) -- (w) -- (x) -- (a) -- (b) -- (y) -- (z) -- (c);
	  \draw[dashed] (d) -- (a);
	  \draw[dashed] (w) -- (z);
	  \draw[dashed] (x) -- (y);
	  \draw[dashed] (b) -- (e);
	\end{scope}
      \end{scope}

      \begin{scope}[shift={(6.2,-3.7)},scale=0.45]
	\foreach \x/\y in {-4/3, 0/3, 4/3} {
	  \begin{scope}[shift={(\x,\y)}]
	    \TBCBM
	    \draw[thick] (e) -- (b) -- (f);
	    \draw[dashed] (b) -- (c);
	  \end{scope}
	}
	\foreach \x/\y in {-4/0, 0/0, 4/0} {
	  \begin{scope}[shift={(\x,\y)}]
	    \TBCBM
	    \draw[thick] (e) -- (b) -- (c);
	    \draw[dashed] (b) -- (f);
	  \end{scope}
	}
	\foreach \x/\y in {-4/-3, 0/-3, 4/-3} {
	  \begin{scope}[shift={(\x,\y)}]
	    \TBCBM
	    \draw[thick] (f) -- (b) -- (c);
	    \draw[dashed] (b) -- (e);
	  \end{scope}
	}
      \end{scope}
    \end{tikzpicture}
    \caption{Subsubcase 2-3-2---modifying the graph $G$ to $G^\prime$}
    \label{fig:4-cycle-cube-resolution2}
  \end{figure}

  \textbf{Subcase 2-4.} Finally, we left with the case when $a \neq c$, $b \neq d$, and either $\{ab, cd\} \cap E(G) = \emptyset$ or $\{bc, ad\} \cap E(G) = \emptyset$. Without loss of generality, suppose that $a,b$ are not connected by an edge and $c,d$ are also not connected by an edge. In this case, as in Subcase~1-1, we consider two graphs. Let $G^\prime$ be the graph obtained from $G$ by removing $x, w$ and then connecting $a, z$ and $y, d$. Let $G^{\prime\prime}$ be the graph obtained from $G$ by removing $x, y, z, w$ and then connecting $a,b$ and $c,d$. Since we have assumed that $a,b$ and $c,d$ are not already connected in $G$, we see that $G^\prime, G^{\prime\prime}$ are both simple graphs. It is clear that $G^\prime$ is connected. We may assume that $G^{\prime\prime}$ is also connected, because if it is not connected then removing $x, y$ from $G$ disconnects the graph. This would mean that $xy$ is a ladder-bridge of $G$, but then Case~1 handles the situation. Therefore we may suppose both $G^\prime$ and $G^{\prime\prime}$ are simple connected bipartite cubic graphs, and the inductive hypothesis applies. We now enumerate the posssible $2$-factors and do the replacement procedure according to Figure~\ref{fig:4-cycle-box-resolution}. From the usual analysis, it follows that
  \[
    \Fac(G) \le \Fac(G^\prime) + \Fac(G^{\prime\prime}) \le m_{n-1} + m_{n-2} = m_n
  \]
  by Lemma~\ref{lem:inequalities-for-m}, \ref{itm:ineq-viii}. 
  
  \begin{figure}[t]
    \centering
    \begin{tikzpicture}
      \begin{scope}[scale=0.7]
	\TBD
	\draw (a) -- (x) -- (y) -- (b);
	\draw (d) -- (w) -- (z) -- (c);
	\draw (x) -- (w);
	\draw (y) -- (z);
	\foreach \p in {(a),(c),(y),(w)} {
	  \draw[fill=white] \p circle (0.05);
	}
	\foreach \p in {(b),(d),(x),(z)} {
	  \draw[fill=black] \p circle (0.05);
	}
	\node at (0,-1) {$G$};
      \end{scope}
      \begin{scope}[shift={(5.2,0)},scale=0.7]
	\TBDMA
	\draw (a) -- (z) -- (c);
	\draw (y) -- (z);
	\draw (b) -- (y) -- (d) ;
	\foreach \x in {(d),(b),(z)} {
	  \draw[fill=black] \x circle (0.05);
	}
	\foreach \x in {(c),(a),(y)} {
	  \draw[fill=white] \x circle (0.05);
	}
	\node at (0,-1) {$G^\prime$};
      \end{scope}
      \begin{scope}[shift={(7.8,0)},scale=0.7]
	\TBDMB
	\draw (a) -- (b);
	\draw (c) -- (d) ;
	\foreach \x in {(b),(d)} {
	  \draw[fill=black] \x circle (0.05);
	}
	\foreach \x in {(c),(a)} {
	  \draw[fill=white] \x circle (0.05);
	}
	\node at (0,-1) {$G^{\prime\prime}$};
      \end{scope}
      \draw[->] (2,0) -- (3,0);

      \begin{scope}[shift={(0,-2.8)},scale=0.5]
	\begin{scope}[shift={(-4,2)}]
	  \TBD
	  \draw[thick] (a) -- (x) -- (y) -- (z) -- (w) -- (d);
	  \draw[dashed] (x) -- (w);
	  \draw[dashed] (b) -- (y);
	  \draw[dashed] (c) -- (z);
	\end{scope}
	\begin{scope}[shift={(-4,0)}]
	  \TBD
	  \draw[thick] (b) -- (y) -- (x) -- (w) -- (z) -- (c);
	  \draw[dashed] (a) -- (x);
	  \draw[dashed] (d) -- (w);
	  \draw[dashed] (y) -- (z);
	\end{scope}
	\begin{scope}[shift={(-4,-2)}]
	  \TBD
	  \draw[thick] (a) -- (x) -- (w) -- (d);
	  \draw[thick] (b) -- (y) -- (z) -- (c);
	  \draw[dashed] (x) -- (y);
	  \draw[dashed] (w) -- (z);
	\end{scope}
	\begin{scope}[shift={(0,1)}]
	  \TBD
	  \draw[thick] (a) -- (x) -- (y) -- (b);
	  \draw[thick] (d) -- (w) -- (z) -- (c);
	  \draw[dashed] (x) -- (w);
	  \draw[dashed] (y) -- (z);
	\end{scope}
	\begin{scope}[shift={(0,-1)}]
	  \TBD
	  \draw[thick] (a) -- (x) -- (w) -- (z) -- (y) -- (b);
	  \draw[dashed] (d) -- (w);
	  \draw[dashed] (x) -- (y);
	  \draw[dashed] (z) -- (c);
	\end{scope}
	\begin{scope}[shift={(4,1)}]
	  \TBD
	  \draw[thick] (d) -- (w) -- (x) -- (y) -- (z) -- (c);
	  \draw[dashed] (a) -- (x);
	  \draw[dashed] (w) -- (z);
	  \draw[dashed] (b) -- (y);
	\end{scope}
	\begin{scope}[shift={(4,-1)}]
	  \TBD
	  \draw[thick] (x) -- (y) -- (z) -- (w) -- cycle;
	  \draw[dashed] (a) -- (x);
	  \draw[dashed] (d) -- (w);
	  \draw[dashed] (b) -- (y);
	  \draw[dashed] (c) -- (z);
	\end{scope}
      \end{scope}

      \begin{scope}[shift={(6.5,-2.8)},scale=0.5]
	\begin{scope}[shift={(-4,2)}]
	  \TBDMA
	  \draw[thick] (a) -- (z) -- (y) -- (d);
	  \draw[dashed] (b) -- (y);
	  \draw[dashed] (c) -- (z);
	\end{scope}
	\begin{scope}[shift={(-4,0)}]
	  \TBDMA
	  \draw[thick] (b) -- (y) -- (z) -- (c);
	  \draw[dashed] (a) -- (z);
	  \draw[dashed] (d) -- (y);
	\end{scope}
	\begin{scope}[shift={(-4,-2)}]
	  \TBDMA
	  \draw[thick] (a) -- (z) -- (c);
	  \draw[thick] (b) -- (y) -- (d);
	  \draw[dashed] (z) -- (y);
	\end{scope}
	\begin{scope}[shift={(0,1)}]
	  \TBDMB
	  \draw[thick] (a) -- (b);
	  \draw[thick] (d) -- (c);
	\end{scope}
	\begin{scope}[shift={(0,-1)}]
	  \TBDMB
	  \draw[thick] (a) -- (b);
	  \draw[dashed] (d) -- (c);
	\end{scope}
	\begin{scope}[shift={(4,1)}]
	  \TBDMB
	  \draw[thick] (d) -- (c);
	  \draw[dashed] (b) -- (a);
	\end{scope}
	\begin{scope}[shift={(4,-1)}]
	  \TBDMB
	  \draw[dashed] (a) -- (b);
	  \draw[dashed] (d) -- (c);
	\end{scope}
      \end{scope}
    \end{tikzpicture}
    \caption{Subcase 2-4---modifying the graph $G$ to $G^\prime$ and $G^{\prime\prime}$}
    \label{fig:4-cycle-box-resolution}
  \end{figure}
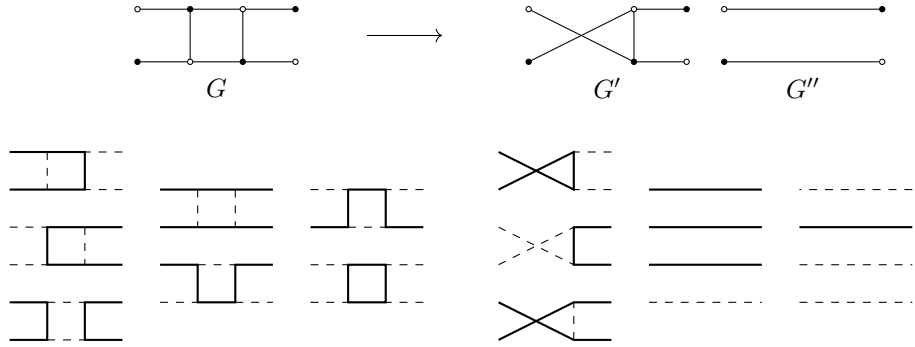

  We now analyze the equality case. Similarly to Subcase~1-1, by the inductive hypothesis, equality holds only if $G^\prime$ and $G^{\prime\prime}$ are isomorphic to $M_{n-1}$ and $M_{n-2}$. For the equality $\Fac(G) = \Fac(G^\prime) + \Fac(G^{\prime\prime})$ to hold, we further need that there is no $2$-factor of $G^\prime$ using the edges $az, zy, yb$ and also no $2$-factor using the edges $dy, yz, zc$. By inspection, we see that the only edges $e$ in $M_{n-1}$ satisfying the above property for $yz$ are precisely the ladder-bridges. This shows that $yz$ corresponds to a ladder-bridge in $M_{n-1}$, and modifying $G^\prime$ to $G$ shows that $G$ is isomorphic to the graph $M_n$. 

  \textbf{Case 3.} Since Case~2 was when $G$ has a $4$-cycle, we now assume that $G$ has no $4$-cycles. Since $G$ is bipartite, this implies that all cycles of $G$ has length at least $6$. Pick an arbitrary vertex $x$, denote its neighbors by $y,z,w$, denote the neighbors of $y$ by $x, a, b$, the neighbors of $z$ by $x, c, d$, and the neighbors of $w$ by $x, e, f$, as in Figure~\ref{fig:no-4-cycle}. As $G$ has no cycles of length smaller than $6$, we immediately see that the vertices $x, y, z, w, a, b, c, d, e, f$ are all distinct. 

  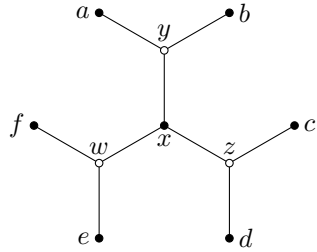
\begin{figure}[t]
    \centering
    \begin{tikzpicture}
      \TC
      \draw (c) -- (z) -- (x) -- (y) -- (a);
      \draw (y) -- (b);
      \draw (z) -- (d);
      \draw (x) -- (w) -- (e);
      \draw (w) -- (f);
      \foreach \p in {(x), (a), (b), (c), (d), (e), (f)} {
	\draw[fill=black] \p circle (0.05);
      }
      \foreach \p in {(y), (z), (w)} {
	\draw[fill=white] \p circle (0.05);
      }
      \node[below] at (x) {$x$};
      \node[above] at (y) {$y$};
      \node[above] at (z) {$z$};
      \node[above] at (w) {$w$};
      \node[left] at (a) {$a$};
      \node[right] at (b) {$b$};
      \node[right] at (c) {$c$};
      \node[right] at (d) {$d$};
      \node[left] at (e) {$e$};
      \node[left] at (f) {$f$};
    \end{tikzpicture}
    \caption{Case 3---when there is no $4$-cycle}
    \label{fig:no-4-cycle}
  \end{figure}

  We now consider six graphs constructed from $G$. The graph $G_j^y$ for $j = 1, 2$ are obtained from $G$ by removing the vertices $x, y$, and then adding the edges $aw$ and $bz$ for $j = 1$, and adding $az$ and $bw$ for $j = 2$. The other graphs $G_1^y, G_2^y, G_1^z, G_2^z$ are constructed similarly, as depicted in Figure~\ref{fig:no-4-cycle-modification}. It is clear that the graphs $G_j^y, G_j^z, G_j^w$ are all simple bipartite cubic graphs. We claim that we may assume that these graphs are also connected for $1 \le i \le 6$. For instance, suppose that $G_1^y$ is not connected. Then $G_1^y$ with the edges $aw$ and $bz$ removed is also not connected. On the other hand, this graph is what we get when we remove the two vertices $x, y$ from $G$. Thus $G_1^y$ not being connected implies that $xy$ is a ladder-bridge in $G$. Since this is already covered in Case~1, we may as well assume that $G_1^y$ is connected. A similar argument works for all other graphs, hence we may assume that the modified graphs are all simple connected bipartite cubic graphs. Hence the inductive hypothesis holds for every $G_j^y, G_j^z, G_j^w$. 

  \begin{figure}[t]
    \centering
    \begin{tikzpicture}
      \begin{scope}[shift={(12,0)},scale=0.7]
	\TCM
	\draw (b) -- (y) -- (f);
	\draw (a) -- (y);
	\draw (c) -- (z) -- (e);
	\draw (d) -- (z);
	\foreach \p in {(y), (z)} {
	  \draw[fill=white] \p circle (0.05);
	}
	\foreach \p in {(a), (b), (c), (d), (e), (f)} {
	  \draw[fill=black] \p circle (0.05);
	}
	\node at (-90:2) {$G_1^w$};
      \end{scope}
      \begin{scope}[shift={(12,-3.5)},scale=0.7]
	\TCM
	\draw (b) -- (y) -- (e);
	\draw (a) -- (y);
	\draw (c) -- (z) -- (f);
	\draw (d) -- (z);
	\foreach \p in {(y), (z)} {
	  \draw[fill=white] \p circle (0.05);
	}
	\foreach \p in {(a), (b), (c), (d), (e), (f)} {
	  \draw[fill=black] \p circle (0.05);
	}
	\node at (-90:2) {$G_2^w$};
      \end{scope}
      \begin{scope}[shift={(4,0)},scale=0.7]
	\TCM
	\draw (c) -- (z) -- (d);
	\draw (b) -- (z);
	\draw (e) -- (w) -- (f);
	\draw (a) -- (w);
	\foreach \p in {(w), (z)} {
	  \draw[fill=white] \p circle (0.05);
	}
	\foreach \p in {(a), (b), (c), (d), (e), (f)} {
	  \draw[fill=black] \p circle (0.05);
	}
	\node at (-90:2) {$G_1^y$};
      \end{scope}
      \begin{scope}[shift={(4,-3.5)},scale=0.7]
	\TCM
	\draw (c) -- (z) -- (d);
	\draw (a) -- (z);
	\draw (e) -- (w) -- (f);
	\draw (b) -- (w);
	\foreach \p in {(w), (z)} {
	  \draw[fill=white] \p circle (0.05);
	}
	\foreach \p in {(a), (b), (c), (d), (e), (f)} {
	  \draw[fill=black] \p circle (0.05);
	}
	\node at (-90:2) {$G_2^y$};
      \end{scope}
      \begin{scope}[shift={(8,0)},scale=0.7]
	\TCM
	\draw (a) -- (y) -- (b);
	\draw (c) -- (y);
	\draw (e) -- (w) -- (f);
	\draw (d) -- (w);
	\foreach \p in {(w), (y)} {
	  \draw[fill=white] \p circle (0.05);
	}
	\foreach \p in {(a), (b), (c), (d), (e), (f)} {
	  \draw[fill=black] \p circle (0.05);
	}
	\node at (-90:2) {$G_1^z$};
      \end{scope}
      \begin{scope}[shift={(8,-3.5)},scale=0.7]
	\TCM
	\draw (a) -- (y) -- (b);
	\draw (d) -- (y);
	\draw (e) -- (w) -- (f);
	\draw (c) -- (w);
	\foreach \p in {(w), (y)} {
	  \draw[fill=white] \p circle (0.05);
	}
	\foreach \p in {(a), (b), (c), (d), (e), (f)} {
	  \draw[fill=black] \p circle (0.05);
	}
	\node at (-90:2) {$G_2^z$};
      \end{scope}
    \end{tikzpicture}
    \caption{Case 3---modifying the graph $G$ to $G_j^y, G_j^z, G_j^w$}
    \label{fig:no-4-cycle-modification}
  \end{figure}
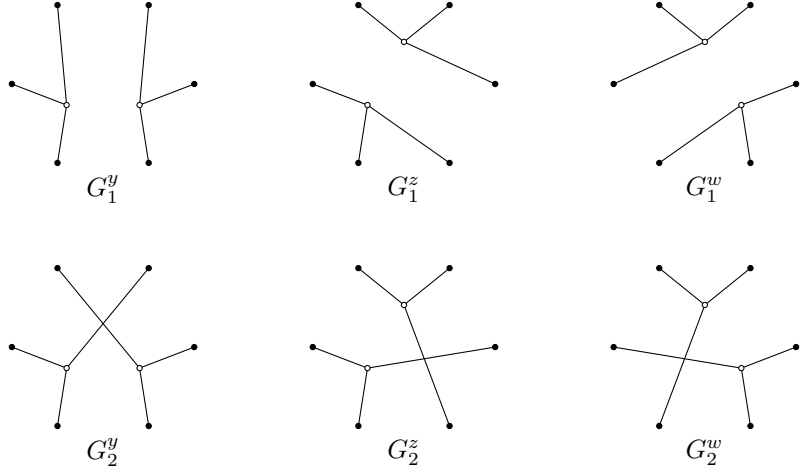

  Now we claim that given any $2$-factor of $G$, it can be modified to a $2$-factor of four of the graphs $G_j^v$ (with $v \in \{y,z,w\}$), in such a way that the $2$-factor of $G$ can be recovered from any of the modified $2$-factors. By symmetry, we only need to consider the case when the $2$-factor on the induced subgraph on $\{x,y,z,w,a,b,c,d,e,f\}$ is as in Figure~\ref{fig:no-4-cycle-resolution}. In this case, the $2$-factor of $G$ can be turned into $2$-factors of $G_1^z, G_1^w, G_2^y, G_2^w$ without changing it outside the depicted region. This shows that 
  \[
    4 \Fac(G) \le \sum_{v,j}^{} \Fac(G_j^v) \le 6 m_{n-1}
  \]
  and by Lemma~\ref{lem:inequalities-for-m}, \ref{itm:ineq-i}, we have $6 m_{n-1} < 4 m_n$. This concludes the proof. 
\end{proof}

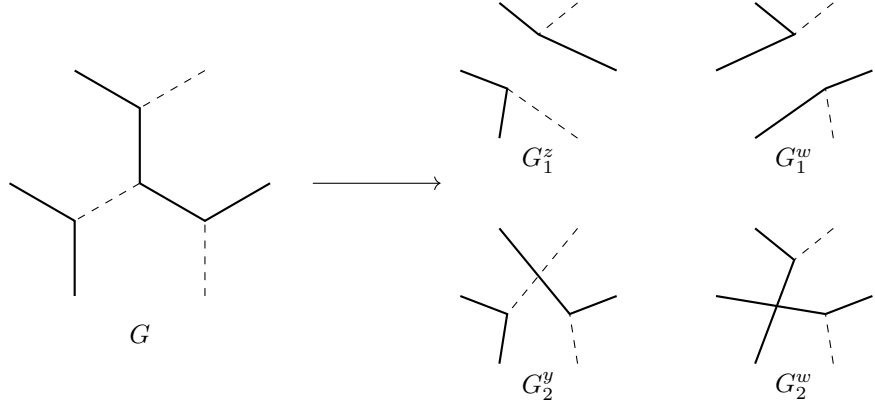
\begin{figure}[t]
  \centering
  \begin{tikzpicture}
    \begin{scope}
      \TC
      \draw[thick] (a) -- (y) -- (x) -- (z) -- (c);
      \draw[thick] (e) -- (w) -- (f);
      \draw[dashed] (b) -- (y);
      \draw[dashed] (d) -- (z);
      \draw[dashed] (x) -- (w);
      \node at (-90:2) {$G$};
    \end{scope}
    \draw[->] (2.3,0) -- (4,0);

    \begin{scope}[shift={(7,0)}]
      \begin{scope}[shift={(1.7,1.5)},scale=0.6]
	\TCM
	\draw[thick] (a) -- (y) -- (f);
	\draw[thick] (c) -- (z) -- (e);
	\draw[dashed] (b) -- (y);
	\draw[dashed] (d) -- (z);
	\node at (-90:2) {$G_1^w$};
      \end{scope}
      \begin{scope}[shift={(-1.7,1.5)},scale=0.6]
	\TCM
	\draw[thick] (a) -- (y) -- (c);
	\draw[thick] (e) -- (w) -- (f);
	\draw[dashed] (b) -- (y);
	\draw[dashed] (d) -- (w);
	\node at (-90:2) {$G_1^z$};
      \end{scope}
      \begin{scope}[shift={(1.7,-1.5)},scale=0.6]
	\TCM
	\draw[thick] (a) -- (y) -- (e);
	\draw[thick] (c) -- (z) -- (f);
	\draw[dashed] (b) -- (y);
	\draw[dashed] (d) -- (z);
	\node at (-90:2) {$G_2^w$};
      \end{scope}
      \begin{scope}[shift={(-1.7,-1.5)},scale=0.6]
	\TCM
	\draw[thick] (a) -- (z) -- (c);
	\draw[thick] (e) -- (w) -- (f);
	\draw[dashed] (b) -- (w);
	\draw[dashed] (d) -- (z);
	\node at (-90:2) {$G_2^y$};
      \end{scope}
    \end{scope}
  \end{tikzpicture}
  \caption{Case 3---modifying a $2$-factor of $G$ to $2$-factors of $G_1^z, G_1^w, G_2^y, G_2^w$}
  \label{fig:no-4-cycle-resolution}
\end{figure}

Let us now deduce Theorem~\ref{AA} from Theorem~\ref{AA-for-bipartite}. What we need is a clever lemma of Alon and Friedland \cite{AF08}. 

\begin{definition}
  Let $G$ be a simple cubic graph. We define a new graph $D(G)$ with vertices
  \[
    V(D(G)) = V(G) \times \{1,2\}
  \]
  and edges
  \[
    E(D(G)) = \{ (v,1) (w,2) : vw \in E(G) \}.
  \]
  The graph $D(G)$ is called the \textdef{bipartite double cover} of $G$. 
\end{definition}

Note that $D(G)$ is always a simple cubic bipartite graph if $G$ is a simple cubic graph. Moreover, if $G$ is connected and not bipartite, then $D(G)$ is connected. 

\begin{lemma}[Alon--Friedland \cite{AF08}] \label{lem:doubling-the-graph}
  Let $G$ be a simple cubic graph. Then
  \[
    \Fac(G)^2 \le \Fac(D(G)).
  \]
\end{lemma}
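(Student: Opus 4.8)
The plan is to identify both sides as weighted counts of the same family of spanning subgraphs of $G$. Let $A$ denote the adjacency matrix of $G$. First I would note that $D(G)$ is bipartite with colour classes $V(G)\times\{1\}$ and $V(G)\times\{2\}$, and that its biadjacency matrix, with rows indexed by the first class and columns by the second, is exactly $A$. Since the number of perfect matchings of a bipartite graph equals the permanent of its biadjacency matrix, and since $\Fac=\PerMat$ on cubic graphs (and $D(G)$ is cubic), this gives $\Fac(D(G))=\operatorname{perm}(A)=\sum_{\sigma}\prod_{v\in V(G)}A_{v,\sigma(v)}$, the sum being over all permutations $\sigma$ of $V(G)$. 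A term is nonzero precisely when $v\sigma(v)\in E(G)$ for every $v$; in particular $\sigma$ has no fixed point. Writing such a $\sigma$ as a product of disjoint cycles, each $2$-cycle $(v\,w)$ encodes an edge $vw$ of $G$, and each cycle of length $k\ge 3$ encodes a cycle of length $k$ in $G$ together with one of its two orientations. Collecting terms, $\Fac(D(G))=\sum_{F}2^{c(F)}$, where $F$ ranges over the spanning subgraphs of $G$ whose components are single edges and cycles of length at least $3$, and $c(F)$ is the number of cycle components of $F$.

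Second, I would expand the other side in the same spirit. By definition $\Fac(G)^2=\PerMat(G)^2$ counts ordered pairs $(M_1,M_2)$ of perfect matchings of $G$. Each vertex of $G$ lies on exactly one edge of $M_1$ and one of $M_2$, so the spanning subgraph with edge set $M_1\cup M_2$ is $1$- or $2$-regular; because $M_1$ and $M_2$ are matchings and $G$ is simple, its components are single edges (namely the edges of $M_1\cap M_2$) and cycles of even length at least $4$ along which $M_1$ and $M_2$ alternate. Conversely, from any spanning subgraph $F$ whose components are single edges and even cycles of length at least $4$, the pairs $(M_1,M_2)$ with $M_1\cup M_2=F$ are obtained by choosing on each cycle, independently, which of its two perfect matchings is $M_1$; there are exactly $2^{c(F)}$ such pairs. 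Hence $\Fac(G)^2=\sum_{F}2^{c(F)}$, where now $F$ ranges only over spanning subgraphs made of single edges and even cycles of length at least $4$.

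Finally I would compare the two sums. Every $F$ appearing in the formula for $\Fac(G)^2$ also appears in the formula for $\Fac(D(G))$ — an even cycle of length $\ge 4$ is in particular a cycle of length $\ge 3$ — and it carries the same weight $2^{c(F)}$ in both; the sum for $\Fac(D(G))$ merely contains additional nonnegative terms, namely those $F$ that use a cycle of odd length. Therefore $\Fac(G)^2\le\Fac(D(G))$. (The same bookkeeping shows that equality holds exactly when $G$ has no spanning subgraph consisting of disjoint edges and cycles that uses a cycle of odd length; this occurs in particular when $G$ is bipartite, consistently with the fact that $D(G)\cong G\sqcup G$ in that case.)

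The content of the argument is elementary; the step most prone to slips, which I would isolate as a couple of short lemmas, is the bookkeeping in the two expansions: ruling out fixed points and $2$-vertex ``cycles'' using that $G$ is loopless and simple; verifying that reversing a cycle of length $\ge 3$ produces a genuinely different permutation while a transposition is its own inverse, so the multiplicity is precisely $2^{c(F)}$ and not some other power of $2$; and checking that $M_1\cup M_2$ is always a disjoint union of single edges and alternating even cycles of length $\ge 4$, with no paths of length $\ge 2$ (a degree-$1$ vertex forces its unique incident edge to lie in $M_1\cap M_2$, which forces its other endpoint to have degree $1$ as well). None of this is difficult, but it is where the $2^{c(F)}$ factors and the exact shape of the index sets must be pinned down correctly.
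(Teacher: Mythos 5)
Your argument is correct, and it is worth noting that the paper itself does not prove this lemma at all: it is imported verbatim from Alon--Friedland \cite{AF08}, so your write-up supplies a proof where the paper only gives a citation. What you have reconstructed is essentially the standard argument behind their trick: $\Fac(D(G))=\PerMat(D(G))$ is the permanent of the adjacency matrix of $G$, which expands as $\sum_F 2^{c(F)}$ over spanning subgraphs $F$ of $G$ whose components are single edges and cycles of length at least $3$, while $\Fac(G)^2=\PerMat(G)^2$ counts ordered pairs of perfect matchings and expands as the same weighted sum restricted to those $F$ whose cycle components are even of length at least $4$; termwise comparison gives the inequality. The bookkeeping you flag as delicate is handled correctly: simplicity of $G$ rules out fixed points of the permutation, transpositions are self-inverse while longer cycles contribute exactly two orientations (hence the weight $2^{c(F)}$ on both sides), and the degree-$1$ argument correctly excludes path components of length at least $2$ in $M_1\cup M_2$. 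Your parenthetical equality analysis (equality precisely when no admissible $F$ contains an odd cycle, in particular for bipartite $G$, where $D(G)$ is two disjoint copies of $G$) is also accurate, and is consistent with how the lemma is used in the paper, where the strict inequality for non-bipartite $G$ comes instead from Lemma~\ref{lem:inequalities-for-m}\ref{itm:ineq-v}.
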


\begin{remark}
  Lemma~\ref{lem:doubling-the-graph} is the core ingredient used in the proof of the main result of Alon--Friedland \cite{AF08}. Even though their two-page paper does not explicitly state this lemma, the proof of the main reusult proceeds by first proving Lemma~\ref{lem:doubling-the-graph} and then combining it with the Bregman--Minc inequality. 
\end{remark}

We can finally prove Theorem~\ref{AA}. 

\begin{proof}[Proof of Theorem~\ref{AA}]
  For $n = 2$, there is only one simple cubic graph on $4$ vertices, namely $K_4$. Hence there is nothing to prove. 
  
  We now assume that $n \ge 3$. If $G$ is not bipartite, then $D(G)$ is connected and bipartite, so 
  \[
    \Fac(G)^2 \le \Fac(D(G)) \le m_{2n} < m_n^2
  \]
  by Lemma~\ref{lem:doubling-the-graph}, Theorem~\ref{AA-for-bipartite}, and Lemma~\ref{lem:inequalities-for-m}, \ref{itm:ineq-v}. This shows that $\Fac(G) < m_n$ if $G$ is not bipartite. The remaining case is when $G$ is bipartite, and this is covered by Theorem~\ref{AA-for-bipartite}. 
\end{proof}

\section{Proof of Theorem~\ref{BB}} \label{sec:quantum-field-theory}

Due to the connection with quantum physics, we use the bra-ket notation for
denoting vectors.  Consider the $2$-dimensional real vector space 
\[
  B=\mathbb{R} \lvert 0\rangle \oplus \mathbb{R} \lvert 1\rangle
  = \{a\lvert 0\rangle +b\lvert 1\rangle : a,b\in \mathbb{R} \}
\]
with basis $\lvert 0 \rangle$ and $\lvert 1 \rangle$. Its $n$-fold tensor
product $B^{\otimes n} = B \otimes \dotsb \otimes B$ is a $2^{n}$-dimensional
real vector space with basis
\[
  \lvert s_{1}\rangle \otimes \lvert s_{2}\rangle \otimes \cdots \otimes \lvert
  s_{n}\rangle 
\]
where $s_{1},s_{2},\dotsc ,s_{n} \in \{0,1\}$. 

Consider the element
\begin{equation*}
  \alpha = \lvert 0\rangle \otimes \lvert 0\rangle +\lvert 1\rangle \otimes
  \lvert 1\rangle \in B\otimes B,
\end{equation*}
and the linear map $\beta$ is given by
\begin{align*}
  \beta \colon B\otimes B\otimes B& \rightarrow
  \mathbb{R}, \\
  \lvert s_{1}\rangle \otimes \lvert s_{2}\rangle \otimes \lvert s_{3}\rangle
  & \mapsto
  \begin{cases}
    1 & \text{there are exactly two }0\text{ among }s_{1},s_{2},s_{3}, \\
    0 & \text{otherwise}.
  \end{cases}
\end{align*}

Given a cubic graph $G=(V,E)$ on $2n$ vertices, consider its incidence set 
\[
  \Phi =\{ (v,e): \text{vertex } v \text{ is incident to edge } e\} \subseteq
  V \times E.
\]
For each edge $e = uv \in E$, we have $(u,e),(v,e)\in \Phi$. To $e$ we assign
$\alpha \in B \otimes B$, where we interpret the two factors of $B$ as
corresponding to $(u,e)$ and $(v,e)$ each, and take their tensor product over
all $e$. The resulting element
\begin{equation*}
  \alpha^{\otimes E(G)}\in B^{\otimes \Phi} \cong B^{\otimes 6n}.
\end{equation*}
can be described as
\begin{equation*}
  \alpha ^{\otimes E(G)}=\sum_{\mathrm{col}}^{{}}\bigotimes_{(v,e)\in \Phi
  }\lvert s_{(v,e)}=\text{color of }e\rangle
\end{equation*}
where the sum is over all $2^{3n}$ colorings of the edges of $G$ by $0$ and $1$.

Each vertex is incident with three edges. Applying the linear map $\beta $
to each vertex and taking their tensor product, we obtain a linear map
\begin{equation*}
  \beta^{\otimes V(G)} \colon B^{\otimes \Phi} \to \mathbb{R}.
\end{equation*}
In terms of bases, it can be described as
\begin{equation*}
  \bigotimes_{(v,e)\in \Phi }\lvert s_{(v,e)}\rangle \mapsto
  \begin{cases}
    1 &
    \begin{matrix}
      \text{for every }v\text{ with edges }e_{1},e_{2},e_{3},\text{ there are} \\
      \text{exactly two }e_{i}\text{ with }s_{(v,e_{i})}=0, 
    \end{matrix}
    \\
    0 & \text{otherwise}.
  \end{cases}
\end{equation*}
It follows that
\begin{equation}
  \beta^{\otimes V(G)}(\alpha ^{\otimes E(G)})=\Fac(G). \label{eqn:tensor-1}
\end{equation}

We now consider a new basis of $B$ given by 
\[
  \lvert x\rangle =\frac{1}{\sqrt{2}}(\lvert 0\rangle +\lvert 1\rangle ),\quad \lvert y\rangle =\frac{1}{\sqrt{2}}(-\lvert 0\rangle +\lvert 1\rangle). 
\]
Then we can write
\[
  \alpha = \lvert x \rangle \otimes \lvert x \rangle + \lvert y \rangle \otimes \lvert y \rangle
\]
with respect to this new basis. On the other hand, in this basis, values of the map $\beta$ are
\begin{align}
  \beta \colon& \lvert x\rangle \otimes \lvert x\rangle \otimes \lvert x\rangle
  \mapsto \frac{3}{2\sqrt{2}},\quad \lvert x\rangle \otimes \lvert x\rangle
  \otimes \lvert y\rangle \mapsto -\frac{1}{2\sqrt{2}},  \notag \\
  & \lvert x\rangle \otimes \lvert y\rangle \otimes \lvert y\rangle \mapsto -
  \frac{1}{2\sqrt{2}},\quad \lvert y\rangle \otimes \lvert y\rangle \otimes
  \lvert y\rangle \mapsto \frac{3}{2\sqrt{2}}. \label{eqn:tensor-2}
\end{align}
The equation \eqref{eqn:tensor-1} holds in this new basis as well. Using \eqref{eqn:tensor-2}, we calculate
\begin{align*}
  \beta ^{\otimes V(G)}(\alpha ^{\otimes E(G)})& =\sum_{x,y\text{-col}} 
  \biggl(\frac{3}{2\sqrt{2}}\biggr)^{\#\text{ of homogeneous vert.}}\biggl(%
  \frac{-1}{2\sqrt{2}}\biggr)^{\#\text{ of colorful vert.}} \\
  & =\biggl(\frac{9}{8}\biggr)^{n}\sum_{x,y\text{-col}}^{{}}\biggl(-\frac{1}{3}%
  \biggr)^{\#\text{of colorful vert.}} \\
  &=\frac{1}{2^{3n}}\sum_{x,y\text{-col}}^{{}}(-3)^{\#\text{ of homogeneous vert.}}
\end{align*}
where the sum is over all $2^{3n}$ colorings of edges by $x$ and $y$, a
homogeneous vertex is a vertex with three edges of the same color, and a
colorful vertex is a vertex that is not homogeneous. This finishes the proof of
Theorem~\ref{BB}. 

We note that we could have made a different change of basis in the above proof,
instead of $\lvert x \rangle$ and $\lvert y \rangle$. The same method applied
to different bases produces other similar-looking formulas. We have chosen the
particular basis $\lvert x \rangle$ and $\lvert y \rangle$ because the end
result has an particularly elegant formulation. 

\section{Proof of Theorem~\ref{CC}} \label{sec:many-cycles}

It has been shown in \cite{Hor14} that $\Psi (r)\geq 2^{r-1} + 6 \cdot
2^{\frac{r-1}{2}}-O(r)$. The graph $MC_k$, defined below, improves the
exponential error term.

For $k \ge 3$, we define the cubic graph $MC_k$ as follows. There are $2k$ vertices
\[
  V(MC_k) = \{ x_i, y_i : 1 \le i \le k \}.
\]
Starting with these vertices, we add the edges $x_i x_{i+1}$ and $y_i y_{i+1}$ for $1 \le i \le k$, so that there are two $k$-cycles. Here, the indices are taken modulo $k$; $x_{i+k} = x_i$ and $y_{i+k} = y_i$. If $k$ is even, add the edges (which we colloquially call ``rungs'')
\[
  \{ x_{2i-1} y_{2i}, x_{2i} y_{2i-1} : 1 \le i \le k/2 \},
\]
and if $k$ is odd, add the rungs
\[
  \{ x_{2i-1} y_{2i}, x_{2i} y_{2i-1} : 1 \le i \le (k-1)/2 \} \cup \{ x_k y_k \}.
\]
It is then clear that $MC_k$ is always a simple connected cubic graph, with cyclomatic number given by $r(MC_k) = k+1$. See Figure~\ref{fig:cyclic-cross} for a picture of $MC_k$ for even $k$. 

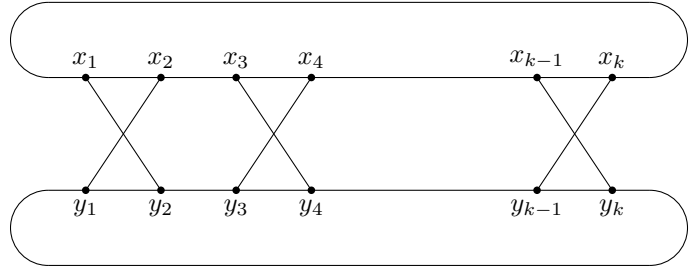
\begin{figure}[t]
  \centering
  \begin{tikzpicture}
    \draw (0,0) -- (8,0);
    \draw (0,1.5) -- (8,1.5);
    \foreach \x in {0,2,6} {
       \draw (\x+0.5,0) -- (\x+1.5,1.5);
       \draw (\x+1.5,0) -- (\x+0.5,1.5);
       \fill (\x+0.5,0) circle (0.05);
       \fill (\x+1.5,0) circle (0.05);
       \fill (\x+0.5,1.5) circle (0.05);
       \fill (\x+1.5,1.5) circle (0.05);
     }
     \node[above] at (0.5,1.5) {$x_1$};
     \node[above] at (1.5,1.5) {$x_2$};
     \node[above] at (2.5,1.5) {$x_3$};
     \node[above] at (3.5,1.5) {$x_4$};
     \node[above] at (6.5,1.5) {$x_{k-1}$};
     \node[above] at (7.5,1.5) {$x_{k}$};
     \node[below] at (0.5,0) {$y_1$};
     \node[below] at (1.5,0) {$y_2$};
     \node[below] at (2.5,0) {$y_3$};
     \node[below] at (3.5,0) {$y_4$};
     \node[below] at (6.5,0) {$y_{k-1}$};
     \node[below] at (7.5,0) {$y_{k}$};
     \draw (0,0) arc (90:270:0.5);
     \draw (0,1.5) arc (270:90:0.5);
     \draw (8,0) arc (90:-90:0.5);
     \draw (8,1.5) arc (-90:90:0.5);
     \draw (0,-1) -- (8,-1);
     \draw (0,2.5) -- (8,2.5);
  \end{tikzpicture}
  \caption{The graph $MC_k$ for even $k$}
  \label{fig:cyclic-cross}
\end{figure}

\begin{theorem}
  For $k \geq 3$, the number of cycles in $MC_k$ is
  \[
    \begin{cases} 2^k + (k + \frac{1}{2}) 2^{\frac{k}{2}} - \frac{3}{2}k & \text{if } k \text{ is even}, \\ 2^k + (k + \frac{7}{2}) 2^{\frac{k-1}{2}} - \frac{1}{2}(3k+5) & \text{ if } k \text{ is odd}. \end{cases}
  \]
  In particular, for $r \ge 4$, 
  \begin{align*}
    \Psi(r) &\ge 2^{r-1} + (r - \tfrac{1}{2}) 2^{\frac{r-1}{2}} - \tfrac{3}{2}(r-1) \text{ for } r \text{ odd}, \\
    \Psi(r) &\ge 2^{r-1} + (r + \tfrac{5}{2}) 2^{\frac{r-2}{2}} - \tfrac{1}{2}(3r+2) \text{ for } r \text{ even}.
  \end{align*}
\end{theorem}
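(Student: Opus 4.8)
To count the cycles of $MC_k$ the plan is to exploit its cyclic structure, analysing even subgraphs by a transfer-matrix method and then refining the analysis to keep track of connectivity. Recall that a cycle is precisely a connected nonempty \emph{even subgraph} (one in which every vertex has degree $0$ or $2$), and that a disconnected even subgraph is a disjoint union of two or more cycles; hence the number of cycles of $MC_k$ equals the number of its connected nonempty even subgraphs. I would carry out the argument in full for even $k = 2m$, the odd case being entirely analogous with one extra small block. The basic structure is: for $1 \le i \le m$ let $B_i$ be the $4$-cycle $x_{2i-1}\,x_{2i}\,y_{2i-1}\,y_{2i}$ built from the horizontal edges $x_{2i-1}x_{2i}$, $y_{2i-1}y_{2i}$ and the two crossing rungs at positions $2i-1,2i$; consecutive blocks $B_i,B_{i+1}$ are joined only by the two \emph{interface edges} $x_{2i}x_{2i+1}$ and $y_{2i}y_{2i+1}$ (indices mod $k$), so $MC_k$ is a cyclic chain $B_1,\dots,B_m$ glued along $m$ interfaces.

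For the transfer step, given an even subgraph $S$ one records at each interface which of its two interface edges lie in $S$. Since every vertex of $S$ has even degree, within each block the parity of the number of interface edges used agrees on the two sides; running once around the cycle, either every interface uses an even number ($0$ or $2$) of its edges (the \emph{even mode}) or every interface uses exactly one (the \emph{odd mode}). Once the interface states are fixed compatibly with one mode, each block has exactly two ways to choose its internal edges so that all four of its vertices get even degree; hence each mode contains exactly $2^m\cdot 2^m = 2^k$ even subgraphs, in agreement with $\tr T^m = 2\cdot 4^m$ for the relevant $4\times 4$ transfer matrix $T$, whose nonzero eigenvalues both equal $4$. As $S$ is recovered from its interface states and internal-edge choices, what remains is to decide which of these $2^{k+1}$ even subgraphs are connected.

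In the odd mode this is immediate: each block is entered and left along exactly one strand, so $S$ is a single necklace threading all $m$ blocks and is always a cycle, giving $2^k$ cycles. The even mode is the crux. There I would classify each block by the states of its two adjacent interfaces: with both interfaces ``double'' the block is a \emph{straight} or \emph{crossed} pass-through (two choices); with exactly one ``double'' it is a \emph{cap} joining the two strands on that side (two choices, each using two internal edges and hence never empty); with both interfaces ``empty'' it is either empty or the isolated $4$-cycle $B_i$. Then: if every interface is ``double'', $S$ is a single cycle iff an odd number of blocks are crossed, contributing $2^{m-1}$ cycles; otherwise the ``empty'' interfaces cut the necklace into arcs, where an arc of $\ge 2$ blocks closes up into a single cycle on its own while a one-block arc is empty or an isolated $4$-cycle, so $S$ is connected iff exactly one arc is nonempty. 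Summing over the position and length of that unique nonempty arc --- one ``empty'' interface gives a single arc of length $m$ contributing $2^m$ for each of the $m$ positions, and with $\ge 2$ ``empty'' interfaces a long arc of length $t$ contributes $2^t$ while the remaining one-block arcs are forced empty --- and adding the all-``double'' term $2^{m-1}$, the even mode contributes $2^{m-1} + m\cdot 2^{m+1} - 3m$ cycles. Adding the two modes and putting $m = k/2$ gives $2^k + (k+\tfrac12)2^{k/2} - \tfrac32 k$, as claimed. The odd-$k$ formula comes out the same way, the lone rung $x_k y_k$ forming an extra one-edge block that is threaded through the identical case analysis. Finally, since $r(MC_k)=k+1$, specialising $k = r-1$ turns the cycle count into the stated lower bounds on $\Psi(r)$.

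I expect the main obstacle to be the even-mode bookkeeping: enumerating the connected even subgraphs without double counting across the three regimes (all interfaces ``double''; exactly one ``empty''; at least two ``empty''), where one must keep in mind that ``cap'' blocks never take the empty completion whereas ``isolated $4$-cycle'' blocks do, and then carrying out the same count cleanly for odd $k$ with its additional block. The transfer-matrix setup and the odd-mode count are routine.
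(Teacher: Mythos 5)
Your argument is correct and is essentially the paper's own proof in different clothing: your odd/even ``mode'' dichotomy is exactly the paper's observation that the parity of $w_i = \#\bigl(C \cap \{x_{2i}x_{2i+1}, y_{2i}y_{2i+1}\}\bigr)$ is independent of $i$, your odd mode reproduces the $2^k$ count, and your arc analysis of the even mode is the paper's summation over the interval of doubled interfaces (parametrized there by $s$ and $d$), arriving at the same tally $2^{m-1} + m\,2^{m+1} - 3m$ and the same final formulas. Like the paper, you defer the odd-$k$ case to an analogous computation, so the two proofs coincide in substance.
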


\begin{proof}
  We only prove the statement for $k$ even, and leave the proof for $k$ odd to the reader. Given a cycle $C$, we first observe that the parity of
  \[
    w_i = \# (C \cap \{x_{2i} x_{2i+1}, y_{2i} y_{2i+1} \})
  \]
  is independent of $1 \le i \le k/2$. This is because the edges $x_{2i} x_{2i+1}$, $y_{2i} y_{2i+1}$, $x_{2i+2} x_{2i+3}$, $y_{2i+2} y_{2i+3}$ are the only edges that connect a vertex in the subset $\{ x_{2i+1}, x_{2i+2}, y_{2i+1}, y_{2i+2} \} \subseteq V(MC_k)$ and its complement, hence $C$ uses an even number of these edges. 

  We first count the number of cycles $C$ such that $w_i$ are all odd. Then either $x_{2i} x_{2i+1}$ is in $C$ and $y_{2i} y_{2i+1}$ is not in $C$, or $x_{2i} x_{2i+1}$ is not in $C$ and $y_{2i} y_{2i+1}$ is in $C$. There are in total $2^{k/2}$ ways to make this choice. For each choice, there are again $2^{k/2}$ ways of completing the set $C \cap \{ x_{2i} x_{2i+1}, y_{2i} y_{2i+1} : 1 \le i \le k/2 \}$ to all of $C$. This shows that the total number of cycles $C$ with $w_i$ odd is
  \[
    (\# \text{ of cycles } C \text{ with } w_i \text{ odd}) = 2^{k/2} \cdot 2^{k/2} = 2^{k}.
  \]

  We now count the number of cycles $C$ such that $w_i$ are all even. In this case, either $w_i = 0$ or $w_i = 2$. We observe that the $i$ such that $w_i = 2$ must form an interval modulo $n$. More precisely, there exist $1 \le s \le n$ and $0 \le d \le n$ such that
  \[
    w_s = w_{s+1} = \dotsb = w_{s+d-1} = 2, \quad w_{s+d} = w_{s+d+1} = \dotsb = w_{s+n-1} = 0.
  \]

  When $d = 0$, the number of ways of completing to a cycle $C$ is $k/2$, since the possible cycles are the $4$-cycles $x_{2i-1} x_{2i} y_{2i-1} y_{2i}$. If $1 \le d \le k/2-1$, then for each choice of $s$ there are $2^{d+1}$ ways of completing it to a cycle. If $d = k/2$, we only need to choose whether $x_{2i-1} x_{2i}$ and $y_{2i-1} y_{2i}$ are in $C$ or $x_{2i-1} y_{2i}$ and $y_{2i-1} x_{2i}$ are in $C$. For $C$ to be a $2k$-cycle instead of two $k$-cycles, the number of $x_{2i-1} y_{2i}$ in $C$ must be odd, hence there are $2^{\frac{k}{2}-1}$ cycles. Then
  \[
    (\# \text{ of cycles } C \text{ with } w_i \text{ even}) = \frac{k}{2} + \sum_{d=1}^{n-1} n \cdot 2^{d+1} + 2^{\frac{k}{2}-1} = \frac{k}{2} \cdot (2^{\frac{k}{2}+1}-3) + 2^{\frac{k}{2}-1}
  \]

  Hence, in aggregate, the number of cycles in $MC_k$ is
  \[
    2^{k} + \Bigl( \frac{k}{2} + \frac{1}{4} \Bigr) 2^{\frac{k}{2}+1} - \frac{3}{2} k.
  \]
  The stated lower bound on $\Psi(r)$ follows directly from $r(MC_k) = k+1$. 
\end{proof}

\section*{Acknowledgment}
\addcontentsline{toc}{section}{Acknowledgment}

The authors are grateful to Noga Alon for discussions on the main result of
this paper. 

\bibliographystyle{plain}
\bibliography{factors.bib}
\addcontentsline{toc}{section}{References}

\end{document}